\documentclass[12pt]{amsart}
\usepackage{amssymb,url,color,enumerate}
\usepackage[left=2cm,right=2cm,top=2cm,bottom=2cm]{geometry}
\usepackage{hyperref}
\newtheorem{theorem}{Theorem}

\newtheorem{corollary}[theorem]{Corollary}

\newtheorem{lemma}[theorem]{Lemma}

\newtheorem{remark}[theorem]{Remark}

\theoremstyle{definition}
\newtheorem{example}[theorem]{Example}

\let\Re\relax
\DeclareMathOperator{\Re}{Re}

\newcommand{\C}{\mathbb{C}}

\newcommand{\N}{\mathbb{N}}

\newcommand{\Q}{\mathbb{Q}}
\newcommand{\R}{\mathbb{R}}
\newcommand{\Z}{\mathbb{Z}}

\newcommand{\re}{\mathrm{Re}}
\newcommand{\im}{\mathrm{Im}}

\newcommand{\s}{\mathcal{S}}
\newcommand{\sh}{\mathcal{S}^{\sharp}}
\newcommand{\shf}{\mathcal{S}^{\sharp\flat}}

\newcommand{\shfs}{\mathcal{S}^{\sharp \flat }(\sigma_0, \sigma_1)}
\newcommand{\abs}[1]{\left\vert#1\right\vert}
\newcommand{\lj}{\lambda_{j}}
\newcommand{\mj}{\mu_{j}}

\usepackage{amssymb}
\setcounter{tocdepth}{3}
\usepackage{graphicx}
\catcode`\é=\active
\defé{\'e}

\catcode`\è=\active
\defè{\`e}

\catcode`\ê=\active
\defê{\^e}

\catcode`\û=\active
\defû{\^u}

\catcode`\â=\active
\defâ{\^a}

\catcode`\à=\active
\defà{\`a}

\catcode`\ù=\active
\defù{\`u}

\catcode`\ç=\active
\defç{\c c}

\catcode`\î=\active
\defî{\^{\i}}

\catcode`\ï=\active
\defï{\"{\i}}

\catcode`\ô=\active
\defô{\^o}

\catcode`\é=\active
\defé{\'e}

\title[On relations equivalent to the GRH for the Selberg class]{On relations equivalent to the generalized Riemann hypothesis for the Selberg class}

\author{Kamel mazhouda and Lejla Smajlovi\'c}
\address{Kamel Mazhouda\\
Faculty of Science of Monastir\\Department of Mathematics\\5000
Monastir\\Tunisia} \email{kamel.mazhouda@fsm.rnu.tn}
\address{Lejla Smajlovi\'c\\ Department of Mathematics\\ University of Sarajevo\\ Zmaja od Bosne 33-35\\ Sarajevo, Bosnia and Herzegovina}
\email{lejla.smajlovic@efsa.unsa.ba}
\date{\today}

\begin{document}

\thanks{The first author is supported by the Tunisian-French Grant DGRST-CNRS 14/R 1501.}

\keywords{Selberg class, Li's criterion, Riemann hypothesis, Volchkov criterion}

\subjclass[2010]{ 11M06, 11M26, 11M36, 11M41}

\begin{abstract}
In this paper we prove that the Generalized Riemann Hypothesis (GRH) for functions in the class $\shf$ containing the Selberg class is equivalent to a certain integral expression
of the real part of the generalized Li coefficient $\lambda_F(n)$ associated to $F\in\shf$, for positive integers $n$. Moreover, we deduce that the GRH is equivalent to a certain expression of $\re(\lambda_F(n))$ in terms of the sum of the Chebyshev polynomials of the first kind. Then, we partially evaluate the integral expression and deduce further relations equivalent to the GRH involving the generalized Euler-Stieltjes constants of the second kind associated to $F$. The class $\shf$ unconditionally contains all automorphic $L$-functions attached to irreducible cuspidal unitary representations of $\mathrm{GL}_N(\Q)$, hence, as a corollary we also derive relations equivalent to the GRH for automorphic $L$-functions.

\end{abstract}

\maketitle
\tableofcontents

\section{Introduction}\label{se.1}

The Selberg class of functions $\s$, introduced by A. Selberg in
\cite{Selberg}, is a general class of Dirichlet series $F$
satisfying the following properties:

\begin{itemize}
\item[(i)]  (Dirichlet series) $F$ possesses a Dirichlet series representation
\begin{equation} \label{ds rep}
F(s)=\sum_{n=1}^{\infty}\frac{a_{F}(n)}{n^{s}},
\end{equation}
that converges absolutely for $\re (s)>1.$

\item[(ii)]  (Analytic continuation) There exists an integer $m \geq 0$ such
that $(s-1)^{m}F(s)$ is an entire function of finite order. The smallest
such number is denoted by $m_{F}$ and called the polar order of $F$.

\item[(iii)]  (Functional equation) The function $F$ satisfies the
functional equation
\begin{equation*}
\xi_{F}(s)=w\overline{\xi_{F}(1-\bar{s})},
\end{equation*}
where
\begin{equation*}
\xi_{F}(s)=s^{m_F}(1-s)^{m_F}F(s)Q^{s}_{F}\prod_{j=1}^{r}\Gamma(\lj s+\mj),
\end{equation*}
with $Q_{F}>0$, $r\geq 0$, $\lambda_{j}>0$, $\abs{w}=1$,
$\re\mj\geq 0$, $j=1,\ldots ,r.$

\item[(iv)]  (Ramanujan conjecture) For every $\epsilon >0,\ a_{F}(n)\ll
n^{\epsilon }$.

\item[(v)]  (Euler product)
\begin{equation*}
\log F(s)=\sum_{n=1}^{\infty }\frac{b_{F}(n)}{n^{s}},
\end{equation*}
where $b_{F}(n)=0$, for all $n\neq p^{m}$ with $m\geq 1$ and $p$
prime, and $b_{F}(n)\ll n^{\theta}$, for some
$\theta<\frac{1}{2}$.
\end{itemize}

The extended Selberg class $\sh$, introduced in \cite{Kacz-PerelliActa} is a class of functions
satisfying conditions (i), (ii) and (iii).\\

As usual, the non-trivial zeros of $F\in\s$ are zeros of the completed function $\xi_F$; we denote the set of non-trivial zeros of $F$ by $Z(F)$. One of the most important conjectures about the Selberg class is the Generalized Riemann Hypothesis (GRH), i.e. the conjecture that for all $F\in\s$, the non trivial zeros of $F$ are located on the critical line $\re(s)=\frac{1}{2}$. For more details concerning the Selberg class we refer to the surveys of Kaczorowski \cite{KaczSurvay} and Perelli \cite{Perelli}.\\

The main class of this paper is the class $\shf \supseteq \s$, defined in
Section \ref{sec:preliminaries} below, consisting of functions $F\in\sh$
that possess an Euler
sum. The class $\shf$ is similar to the class
$\widetilde{\mathcal{S}}$ considered in \cite{AkMur2008}, whose
elements under average density hypothesis have uniformly
distributed (modulo 1) imaginary parts of zeros. The main reason why we consider the class $\shf$ is that, as proved in \cite[Prop.
2.1. and Sec. 4]{OdSm11}, it contains both the Selberg class $\s$ and (unconditionally) the
class of all automorphic $L$ functions attached to automorphic
irreducible unitary cuspidal representations of
$GL_{N}(\mathbb{Q})$.\\

For an integer $n\neq0$, the generalized $n$th Li coefficient attached to $F\in\shf$ non-vanishing at zero is defined as the sum
\begin{equation} \label{lambda F def}
\lambda_{F}(n)={\sum_{\rho\in Z(F)}}^{*}\left(1-\left(1-\frac{1}{\rho}%
\right)^{n}\right),
\end{equation}
where the $\ast$ means that the sum is taken in the sense of the limit $\lim\limits_{T\to\infty} \sum_{|\im(\rho)|\leq T}$. In the sequel, we denote by $\shf_0$ the set of all $F\in\shf$, non-vanishing at zero. For $t>0$, by $\shf_{t}$ we denote the set of all $F\in\shf$ such that all eventual non-trivial zeros of $F$ which lie off the critical line have the absolute value of the imaginary part bigger than $t$.

\vskip .06in
The existence of coefficients $\lambda _{F}(n)$, for $n\in\Z\setminus\{0\}$ and $F\in\shf_0$ is proved in \cite[Theorem 4.1.]{Sm10}, together with the generalized Li criterion, stating that the GRH for $F\in\shf_0$ is equivalent to the
non-negativity of the set of numbers $\re(\lambda_{F}(n))$ for $n\in
\mathbb{N}$ (see \cite[Theorem 4.3.]{Sm10}).
\vskip .06in
Moreover, in \cite{OdSm11} and \cite{OM} it is proved that for $F \in \shf_0$
\begin{equation} \label{GRH equiv}
GRH\ \Leftrightarrow\ \lambda_{F}(n)=\frac{d_{F}}{2}n \log n + c_{F}n+ O(\sqrt{n}\log n), \text{  as   } n\to\infty,
\end{equation}
where
$$c_{F}=\frac{d_{F}}{2}(\gamma-1)+\frac{1}{2}\log(\lambda Q_{F}^{2}), \ \ \ \lambda=\prod_{j=1}^{r}\lambda_{j}^{2\lambda_{j}},$$
$d_{F}=2\sum\limits_{j=1}^{r}\lambda_{j}$ is the degree of $F$ and $\gamma$ is the classical Euler constant.

The above result was proved also in \cite{MAZ} using the saddle-point method in conjunction
with the theory of the N\"orlund-Rice integrals.\\

There exist many relations equivalent to the Riemann Hypothesis. An interesting integral criteria, proved by M. Balazard, E. Saias, and M. Yor \cite{BSY99} states that the Riemann Hypothesis is equivalent to the equation
\begin{equation*} \label{BSY crit}
\int\limits_{-\infty}^{\infty} \frac{\log|\zeta(1/2+it)|}{1+4t^2}=0,
\end{equation*}
where $\zeta(s)$ denotes the Riemann zeta function.

Another integral criteria is obtained by V. Volchkov in \cite{Volchkov} and states that the Riemann Hypothesis is equivalent to the equation
\begin{equation}\label{Volch crit}
\int\limits_{0}^{\infty}\left(\frac{1-12t^2}{(1+4t^2)^3}\int\limits_{\frac{1}{2}}^{\infty}\log |\zeta(\sigma + it)|d\sigma \right) dt = \pi \frac{3-\gamma}{32}.
\end{equation}
Both results are generalized by S. K. Sekatskii, S. Beltraminelli and D. Merlini in \cite{SBM12} and \cite{SBM09}.

Actually, an elementary computation, based on application of Littlewood's theorem to the function $\log\zeta(s)$ and the appropriate rectangular contour shows that equation \eqref{Volch crit} is equivalent to the equation
\begin{equation}\label{sbm crit}
\int\limits_{0}^{\infty} \frac{t\arg{\zeta(1/2 +it)}}{(1/4 + t^2)^2}dt=\pi \frac{\gamma - 3}{2},
\end{equation}
hence we call equation \eqref{sbm crit} the Volchkov criterion for the Riemann Hypothesis as well.

The Volchkov criterion and the  results of \cite{SBM09} are further generalized in \cite{HJM15}, where equation \eqref{sbm crit} was interpreted in terms of the argument of the  Veneziano amplitude.
\vskip .06in
In this paper, we prove that the GRH for function $F\in\shf_0$ is equivalent to the integral representation
\begin{equation}\label{int rep re lambda}
\re(\lambda_F(n))= 16n\int\limits_{0}^{\infty} N_F(x) \frac{x}{(4x^2+1)^2} U_{n-1}\left(\frac{4x^2 -1}{4x^2 +1} \right)dx - (1-(-1)^n)N_F(0)
\end{equation}
of the real part of the $n$th generalized Li coefficient $\lambda_F(n)$, for all positive integers $n$. Here, $N_F(x)$ denotes the counting function of the number of non-trivial zeros $\rho\in Z(F)$ such that $|\im(\rho)|\leq x$ and $U_{n-1}(x)$ is the Chebyshev polynomial of the second kind, see sections 2.2. and 2.5. Obviously, $N_F(0)$ is the number of Siegel zeros of $F$, i.e. eventual non-trivial real zeros of $F$.
\vskip .06in
We also prove that the GRH for $F\in\shf_0$ is equivalent to the representation
\begin{equation}\label{Re(lambda) under GRH}
\re(\lambda_F(n))= \sum_{\rho=\sigma + i\gamma\in Z(F)} \left( 1-T_n \left(\frac{4\gamma^2 -1}{4\gamma^2 +1} \right)\right)
\end{equation}
of $\re(\lambda_F(n))$, for all positive integers $n$, where $T_n(x)$ is the Chebyshev polynomial of the first kind and the zeros on the right-hand side of \eqref{Re(lambda) under GRH} are taken according to their multiplicities.

Then, we partially evaluate the integral in \eqref{int rep re lambda} and write $\re(\lambda_F(n))$ as a sum of integral and oscillatory part, which we relate to the generalized Euler-Stieltjes constants of the second kind associated to $F\in\shf_0$. Then, we show that the GRH for $F\in\shf_0$ is equivalent to a certain asymptotic integral formula (see formula \eqref{asymptotic eq to GRH} below).\\

In the case when $n=1$ or $n=2$, under additional assumptions on the location of the imaginary part of the first non-trivial zero of $F\in\shf$, which lies  off the critical line, we prove that equations \eqref{int rep re lambda} and \eqref{Re(lambda) under GRH} with $n=1$ or $n=2$ are equivalent to the GRH.  As a special case of this result and the evaluation of the integral in \eqref{int rep re lambda} when $F=\zeta$ and $n=1$ we deduce equation \eqref{sbm crit}, meaning that the Volchkov criterion for the Riemann Hypothesis is a special case of our results.

Moreover, we prove that the GRH for the function $F\in\shf_{1/\sqrt{3}}$ is equivalent to a certain integral expression of the real part of the constant term in the Laurent (Taylor) series expression of the logarithmic derivative $F'(s)/F(s)$ at $s=1$. \\

The class $\shf$ unconditionally contains the class of $L-$functions attached to irreducible, cuspidal unitary representations of $\mathrm{GL}_N(\Q)$, hence, as a corollary, we also derive relations equivalent to the GRH for automorphic $L-$functions.\\

The paper is organized as follows: in section 2 we recall necessary background results; in section 3 we derive representations of the real part of the generalized Li coefficient equivalent to the GRH. Section 4 is devoted to partial evaluation of the integral on the right hand side of \eqref{int rep re lambda}, while in section 5 we discuss the relation with the generalized Euler-Stieltjes constants of the second kind. In section 6 we derive results for the automorphic $L-$functions. Concluding remarks are presented in the last section.

\section{Preliminaries}\label{sec:preliminaries}

\subsection{The class $\shf$}
Throughout this paper, we focus on the class $\shf$ of
functions satisfying axioms (i), (ii) and the following two
axioms:
\begin{enumerate}
\item[(iii')]  (Functional equation) The function $F$ satisfies the
functional equation
\begin{equation*}
\xi _{F}(s)=w\overline{\xi _{F}(1-\bar{s})},
\end{equation*}
where
\begin{equation*}
\xi _{F}(s)=s^{m_F}(1-s)^{m_F}F(s)Q_{F}^{s}\prod_{j=1}^{r}\Gamma (\lj s+\mj)= \gamma_F(s)F(s)
\end{equation*}
with $Q_{F}>0$, $r\geq 0$, $\lambda _{j}>0$, $\abs{w}=1$,
$\re\mj>-\frac{1}{4}$, $\re(\lj+2\mj)>0$, $j=1,\ldots,r.$

\item[(v')]  (Euler sum) The logarithmic derivative of the function $F$ possesses a
Dirichlet series representation
\begin{equation*}
\frac{F'}{F}(s)=-\sum_{n=2}^{\infty}\frac{c_{F}(n)}{n^{s}}
\end{equation*}
converging absolutely for $\re (s)>1$.
\end{enumerate}

Let us note that (iii') implies that $\re(\lj+\mj)>0$.\\

For $F\in\shf$ it is easy to deduce, using axioms (i), (iii') and the Phragm\' en Lindel\"of principle that the completed function $\xi_F$ is entire function of order one. Moreover, using an explicit formula for the class $\shf$ applied to a suitably chosen test function, it is proved in \cite{OdSm11} that for all $F\in\shf_0$ and all $z\notin Z(F)$ one has
\begin{equation}\label{log der xi F}
\frac{\xi_F'}{\xi_F}(z) = \lim_{T \to\infty} \sum_{\rho \in Z(F), |\im(\rho)|\leq T} \frac{1}{z-\rho} =\left. \sum_{\rho \in Z(F)}\right.^{\ast} \left(\frac{1}{z-\rho}\right) ,
\end{equation}
where each zero is counted according to its multiplicity.

\subsection{Distribution of non-trivial zeros of $F\in\shf$}

For $T>0$, such $T$ and $-T$ are not the ordinates of a non-trivial zero, we denote by $N_F(T)$ the number of non-trivial zeros $\rho=\sigma+it\in Z(F)$ of $F\in\shf$ such that $|t|\leq T$. By $N_F^+(T)$ and $N_F(T)^-$ respectively we denote the number of non-trivial zeros of $F$ with $0\leq\im\rho\leq T$, respectively $-T\leq\im\rho\leq 0$.

In the case when $T$ or $-T$ is the ordinate of a non-trivial zero, we define $N_F(T)=N_F(T+0)$ and $N_F^{\pm}(T)=N_F^{\pm}(T+0)$.\\

Vertical distribution of zeros of a function $F\in\sh$
satisfying axiom (v') was discussed in \cite[Sec.
5.1.]{Sm10} where it was proved that
\begin{equation} \label{N F}
N_{F}^{+}(T)=\frac{d_{F}}{2\pi}T\log T+C_{F}T+ a_F \log T + m_F+O(1/T)+S_{F}^{+}(T), \text{  as  } T\to\infty,
\end{equation}
where
$$C_{F}=\frac{1}{2\pi}(\log q_{F}-d_{F}(\log(2\pi)+1)),$$
$$ q_{F}=(2\pi)^{d_{F}}Q_{F}^{2}\prod_{j=1}^{r}\lambda_{j}^{2\lambda_{j}}$$
is the conductor of $F$,
$$
a_F=\frac{1}{\pi}\sum_{j=1}^{r}\im(\mu_j)
$$
and, in the case when $T$ is not the ordinate of the non-trivial zero, $S_F^{+}(T)$ is the value of the function $\frac{1}{\pi}\arg F(s)$ obtained by continuous variation along the straight lines joining points $2$, $2+iT$ and $1/2 + iT$. When $T$ is the ordinate of a non-trivial zero of $F$, $S_F^+(T)$ is by definition equal to $S_F^{+}(T+0)$. Moreover, it is proved that $S_{F}^{+}(T)= O_{F}(\log T)$, as $T\to\infty$.\\

Proceeding analogously as in \cite{Sm10}, applying the argument principle to the completed function $\xi_F(s)$ and the rectangle with vertices $-1-iT$, $2-iT$, $2+iT$ and $-1+iT$ we get
\begin{multline*}
N_{F}(T)=2m_F + \frac{2T\log Q_F}{\pi} + \\+ \frac{1}{\pi} \sum_{j=1}^{r} \left( \arg \Gamma\left(\lambda_j\left(\frac{1}2 + iT\right) +\mu_j \right) -\arg \Gamma\left(\lambda_j\left(\frac{1}2 - iT\right) +\mu_j \right) \right) +  S_F(T),
\end{multline*}
or, equivalently,
\begin{multline} \label{N F in terms of argument}
N_{F}(T)=2m_F + \frac{2T\log Q_F}{\pi} + \\+ \frac{1}{\pi}\im\left( \sum_{j=1}^{r} \left( \log \Gamma\left(\lambda_j\left(\frac{1}2 + iT\right) +\mu_j \right) +\log \Gamma\left(\lambda_j\left(\frac{1}2 + iT\right) +\overline{\mu_j} \right) \right) \right) +  S_F(T),
\end{multline}
where, in the case when $\pm T$ is not the ordinate of the non-trivial zero, $S_F(T)$ is the value of the function $\frac{1}{\pi}\arg F(s)$ obtained by continuous variation along the straight lines joining points $1/2 -iT$, $2-iT$, $2+iT$ and $1/2 + iT$ and, in the case when $\pm T$ is the ordinate of a non-trivial zero of $F$, we define $S_F(T)$ to be equal to $S_F(T+0)$.

Moreover, it is easy to see that the bound $S_F(x)=O_{F}(\log T)$, as $T\to\infty$ holds true in the case when the axiom (iii) is replaced by (iii').\\

In the case when the coefficients $a_F(n)$ in the Dirichlet series representation \eqref{ds rep} of $F\in\shf$ are real and the function has no Siegel zeros, application of the reflection principle and the functional equation axiom (iii') yields that non-trivial zeros come in conjugate pairs, hence $N_{F}^{+}(T)=N_{F}^{-}(T)$. Moreover, in this case the function $S_F(T)$ is equal to the value of the function $\frac{2}{\pi} \arg F(s)$ obtained by continuous variation along the straight lines joining points $2$, $2+iT$ and $1/2 + iT$, which is actually equal to $\frac{2}{\pi} \arg F(1/2+iT)$.

\subsection{The generalized Li coefficients}

The convergence of the sum \eqref{lambda F def} defining the generalized Li coefficients  associated to the function $F\in\shf_0$ is proved in \cite{Sm10}, using the representation \eqref{log der xi F} of the logarithmic derivative of the completed function $\xi_F$ and the fact that $\xi_F$ is entire function of order one.\\

The symmetry $\rho \leftrightarrow (1-\overline{\rho})$
in the set $Z(F)$ of non-trivial zeros of $F
\in\mathcal{S}^{\sharp\flat}_0$ implies that
$\lambda_{F}(-n)=\overline{\lambda_{F}(n)}$, for all
$n\in\mathbb{N}$, hence, $\re(\lambda_{F}(n))=\re(\lambda_{F}(-n))$,
for all $n\in\mathbb{N}$.\\

The importance of the generalized Li coefficients lies in the fact that the GRH  for  $F\in\shf_0$ is equivalent to positivity of the sequence of real numbers $\{\re(\lambda_{F}(n))\}_{n\geq 1}$. Actually, the GRH for  $F\in\shf_0$ is equivalent to the asymptotic relation \eqref{GRH equiv}, as proved in \cite{OdSm11} and \cite{OM}. Therefore, it is of interest to deduce arithmetic formulas for computation of $\lambda_F(n)$.\\

An interesting arithmetic formula for $\lambda_F(n)$ is obtained in \cite[Theorem 2.4.]{OdSm11}, where the authors prove that for $F\in\shf_0$ and positive integers $n$ the Li coefficients $\lambda_F(-n)$ can be expressed as
\begin{equation}\label{lambda of -n}
\lambda_F(-n)=m_F+ \sum_{l=1}^n {n \choose l}\gamma_F(l-1)+n\log Q_F  + \sum_{l=1}^n {n \choose l}\eta_F(l-1),
\end{equation}
where
$$
\eta_F(0)= \sum_{j=1}^r \lambda_j\psi(\lambda_j+\mu_j)
$$
and
$$
\eta_F(l-1)= \sum_{j=1}^r (-\lambda_j)^l \sum_{k=0}^{\infty}\frac{1}{(\lambda_j+\mu_j+k)^l}, \text{    for  } l\geq 2.
$$
Here, $\psi(s)= \frac{\Gamma'}{\Gamma}(s)$ denotes the digamma function and $\gamma_F(k)$ are the coefficients in the Laurent (Taylor) series expansion of $\frac{F'}{F}(s)$ around its possible pole at $s=1$. The coefficients $\gamma_F(k)$ are called the generalized Euler-Stieltjes constants of the second kind (see e.g. \cite{OdSM15} for a more detailed explanation).\\

%Under certain additional assumptions on $F\in\s$, in \cite[Corollary 6.3]{Sm10} it is proved that %coefficients $\gamma_F(k)$ have the following arithmetic representation
%$$\gamma_F(k)=\frac{(-1)^{k}}{k!}\lim_{x\mapsto+\infty}\left(\frac{m_{F}}{k+1}\log^{k+1}x-
%\sum_{j<x}\frac{\Lambda_{F}(j)}{j}\log^{k}j\right),$$
%where $\Lambda_F(k)=b_F(k)\log k$ is the generalized von Mangoldt function.\\

In this paper we need a slightly different representation of the generalized Li coefficient $\lambda_F(-n)$, $n\in\N$, given in the following Lemma.

\begin{lemma} For $F\in\shf_0$ the generalized Li coefficient $\lambda_F(-n)$ for $n\in\N$ can be expressed as
\begin{equation}\label{lambda of -n true}
\lambda_F(-n)=m_F+ \sum_{l=1}^n {n \choose l}\gamma_F(l-1)+n\log Q_F  + n\sum_{j=1}^r \lambda_j\psi(\lambda_j+\mu_j) + \sum_{j=1}^r \sum_{l=2}^n {n \choose l} \frac{\lambda_j^l}{(l-1)!} \psi^{(l-1)}(\lambda_j+\mu_j).
\end{equation}
\end{lemma}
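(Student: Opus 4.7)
The plan is to start from the already-established representation \eqref{lambda of -n} for $\lambda_F(-n)$ and show that the two formulas differ only in the cosmetic way the quantities $\eta_F(l-1)$ are written. So the whole proof reduces to identifying the series defining $\eta_F(l-1)$ with a polygamma value.

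First, I would isolate the $l=1$ term from $\sum_{l=1}^n\binom{n}{l}\eta_F(l-1)$. This term is exactly $n\,\eta_F(0)=n\sum_{j=1}^r\lambda_j\psi(\lambda_j+\mu_j)$, reproducing the third and fourth terms of \eqref{lambda of -n true}. The remaining task is therefore to show, for each $l\ge 2$ and each $j$, that
\[
(-\lambda_j)^l\sum_{k=0}^{\infty}\frac{1}{(\lambda_j+\mu_j+k)^l}
=\frac{\lambda_j^l}{(l-1)!}\,\psi^{(l-1)}(\lambda_j+\mu_j).
\]

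For this I would invoke the standard series representation of the polygamma function,
\[
\psi^{(m)}(z)=(-1)^{m+1}m!\sum_{k=0}^{\infty}\frac{1}{(z+k)^{m+1}},
\]
valid for $z$ away from nonpositive integers, which is legitimate here because the hypothesis $\re(\lambda_j+\mu_j)>0$ stemming from axiom (iii$'$) keeps $\lambda_j+\mu_j$ in the right half-plane. Applying this with $m=l-1$ and $z=\lambda_j+\mu_j$ and substituting gives $(-1)^l\cdot(-\lambda_j)^l/(l-1)!=\lambda_j^l/(l-1)!$ as the coefficient, which is the identity claimed above.

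Putting the two pieces together in \eqref{lambda of -n} yields \eqref{lambda of -n true}. I do not anticipate any genuine obstacle: the main (and only) point to verify carefully is the sign bookkeeping in the polygamma identity and the convergence of the defining series for $\eta_F(l-1)$, both of which are immediate from axiom (iii$'$).
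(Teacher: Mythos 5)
Your proposal is correct and follows essentially the same route as the paper: the paper's proof is a one-line invocation of the identity $\sum_{k\ge 0}(z+k)^{-l}=\frac{(-1)^l}{(l-1)!}\psi^{(l-1)}(z)$ (stated there via the Hurwitz zeta function), which is exactly the polygamma series you use, with the same sign cancellation against $(-\lambda_j)^l$. Your extra remarks on isolating the $l=1$ term and on convergence via $\re(\lambda_j+\mu_j)>0$ are just the details the paper leaves implicit.
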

\begin{proof}
Using the relation
$$
\zeta(l,z)=\sum_{k=0}^{\infty}\frac{1}{(z+k)^l}=\frac{(-1)^l}{(l-1)!}\psi^{(l-1)}(z)
$$
between the Hurwitz zeta function $\zeta(s,z)$ and the derivatives of digamma function for $l\geq 2$ we may write the equation \eqref{lambda of -n} as \eqref{lambda of -n true}.
\end{proof}

The sum
$$
S_{\infty}(F,n):=n\log Q_F+ n\sum_{j=1}^r \lambda_j\psi(\lambda_j+\mu_j) + \sum_{j=1}^r \sum_{l=2}^n {n \choose l} \frac{\lambda_j^l}{(l-1)!} \psi^{(l-1)}(\lambda_j+\mu_j)
$$
arises from the gamma factors in the functional equation (iii'), hence it is called the archimedean part of the generalized Li coefficient $\lambda_F(-n)$. Analogously, the sum
\begin{equation} \label{S NA}
S_{ \mathrm{NA}}(F,n):=m_F+ \sum_{l=1}^n {n \choose l}\gamma_F(l-1)
\end{equation}
is called the non-archimedean part of $\lambda_F(-n)$.

In \cite{OdSm11} and \cite{OM} the authors compute the full asymptotic expansion of the archimedean part of the generalized Li coefficient $\lambda_F(-n)$ attached to $F\in\shf_0$ as $n\to\infty$, and prove that the GRH is equivalent to the asymptotic bound
\begin{equation} \label{bound S NA}
\sum_{l=1}^n {n \choose l}\re\left(\gamma_F(l-1)\right)=O_F(\sqrt{n} \log n), \text{   as   } n\to\infty.
\end{equation}

\subsection{Automorphic $L-$functions}

The (finite) automorphic $L$-function $L(s,\pi)$ attached to an irreducible
unitary cuspidal representation $\pi$ of $GL_N(\Q)$ is given for $\re(s)>1$ by the absolutely convergent product over primes $p$ of its local factors
\begin{equation} \label{autL funct}
L(s,\pi) =\prod_{p}\prod_{j=1}^{N}(1-\alpha_{p,j}(\pi)p^{-s})=\sum_{n=1}^{\infty}\frac{a_{n}(\pi)}{n^{s}}.
\end{equation}
The completed $L$-function
$$\Lambda(s,\pi)=Q(\pi)^{s/2}L_{\infty}(s,\pi)L(s,\pi),$$
where $Q(\pi)$ is the conductor of $\pi$ and
$$L_{\infty}(s,\pi)=\prod_{j=1}^{N}\Gamma_{\R}(s+k_{j}(\pi))= \prod_{j=1}^{N}\pi ^{-(s+k_j(\pi))/2}\Gamma\left(\frac{1}{2}(s+k_{j}(\pi))\right)$$
is the archimedean factor satisfies the functional equation
\begin{equation} \label{funct eq Lambda}
\Lambda(s,\pi)=\epsilon(\pi)\overline{\Lambda(1-\overline{s},\pi)}
\end{equation}
with the constant $\epsilon(\pi)$ of absolute value 1.

Using the results of \cite{Gelbart-Shahidi},
\cite{Jacque-Sh I}, \cite{Jacque-Sh II}, \cite {Rudnick-Sarnak}
and \cite{Shahidi 1}-\cite{Shahidi 4}, it is proved in \cite[Sec. 4]{OdSm11} that the $L-$function $L(s,\pi)$ belongs to the class $\shf$.

\subsection{The Chebyshev polynomials of the first and the second kind}
The Chebyshev polynomials of the first kind are defined for $x\in[-1,1]$ by the recurrence relations
$T_0(x)  = 1,\ T_1(x)  = x$ and $T_{n+1}(x)  = 2xT_n(x) - T_{n-1}(x)$, see e.g. \cite[pages 993-996]{GR07}. Putting $x=\cos(\theta)$, $\theta\in{[0,\pi]}$, we may write $T_n(x)=\cos(n\theta)$.
\vskip .06in
The generating function for the sequence $\{T_{n}(x)\}$ of the Chebyshev polynomials of the first kind is
\begin{equation} \label{gen fon for Tn}
\sum_{n=0}^{\infty}T_n(x) t^n = \frac{1-tx}{1-2tx+t^2},\quad t\in(0,1).
\end{equation}
The Chebyshev polynomials of the second kind are defined for $x\in[-1,1]$ by the recurrence relations
$U_0(x) = 1, U_1(x)  = 2x$ and $U_{n+1}(x)  = 2xU_n(x) - U_{n-1}(x)$. The  generating function for the sequence $\{U_{n}(x)\}$ of the Chebyshev polynomials of the second kind is
$$\sum_{n=0}^{\infty}U_n(x) t^n = \frac{1}{1-2 t x+t^2},\quad t\in(0,1).$$

There are many relations between the Chebyshev polynomials of the first and the second kind, see \cite{GR07}, section 8.94. In the sequel, we will use relations
\begin{equation}\label{Tn deriv}
\frac{d}{dx}T_n(x)=nU_{n-1}(x)
\end{equation}
and
\begin{equation}\label{recursion T U}
(1-x^2)U_{n-1}(x)=xT_n(x)-T_{n+1}(x), \text{   } n\geq 1.
\end{equation}
\vskip .06in

\section{Representations of $\re(\lambda_F(n))$ equivalent to the GRH}

In this section we derive representations of the real part of the $n$th generalized Li coefficient attached to $F\in\shf_0$ equivalent to the GRH for $F$.

In Theorem \ref{thm: GRH implies formulas} below we prove that the GRH is equivalent to \eqref{Re(lambda) under GRH} and \eqref{int rep re lambda}, for all positive integers $n$. Then, we prove that, under certain mild assumptions on the location of the imaginary part of the first eventual non-trivial zero of $F\in\shf$ which is off the critical line, equations \eqref{Re(lambda) under GRH} and \eqref{int rep re lambda} with $n=1$ or $n=2$ actually yield the GRH for $F$.

\begin{theorem} \label{thm: GRH implies formulas}
The GRH for $F\in\shf_0$ is equivalent to representation \eqref{Re(lambda) under GRH} of $\re(\lambda_F(n))$ for all positive integers $n$, or, equivalently, to representation \eqref{int rep re lambda} of $\re(\lambda_F(n))$ for all positive integers $n$.

%Specially, the GRH is equivalent to either of two formulas
%$$
%-\re\left(\frac{\xi_F'}{\xi_F}(0)\right) =2 \sum_{\rho=\sigma + i\gamma\in Z(F)} %\frac{1}{4\gamma^2 +1}
%$$
%or
%\begin{equation*}
%-\re\left(\frac{\xi_F'}{\xi_F}(0)\right)= 16\int\limits_{0}^{\infty} N_F(x) \frac{x}{(4x^2+1)^2} %dx - (1-(-1)^n)N_F(0).
%\end{equation*}
\end{theorem}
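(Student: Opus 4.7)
The plan is to establish the chain of implications GRH~$\Rightarrow$~\eqref{Re(lambda) under GRH}~$\Leftrightarrow$~\eqref{int rep re lambda}~$\Rightarrow$~GRH, so that all three are equivalent. The middle equivalence will be unconditional and handled by a Stieltjes integration by parts, while the two outer implications are each short consequences of classical Chebyshev identities and of the generalized Li criterion.

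For the equivalence of \eqref{Re(lambda) under GRH} and \eqref{int rep re lambda}, I would combine the chain rule with \eqref{Tn deriv} to record
\[
\frac{d}{dx}\,T_n\!\left(\frac{4x^2-1}{4x^2+1}\right) \;=\; \frac{16\,n\,x}{(4x^2+1)^2}\,U_{n-1}\!\left(\frac{4x^2-1}{4x^2+1}\right),
\]
view the sum on the right of \eqref{Re(lambda) under GRH} as a Riemann--Stieltjes integral of $1-T_n((4x^2-1)/(4x^2+1))$ against $dN_F(x)$, and integrate by parts on $(0,\infty)$. Because $T_n(-1)=(-1)^n$, the boundary at $x=0$ absorbs the atom of $dN_F$ coming from the Siegel zeros and produces the additive correction $-(1-(-1)^n)N_F(0)$ that appears in \eqref{int rep re lambda}; the boundary at $+\infty$ vanishes thanks to the Riemann--von Mangoldt asymptotic \eqref{N F}, which gives $N_F(x)=O(x\log x)$, combined with the decay $1-T_n((4x^2-1)/(4x^2+1))=O(x^{-2})$.

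For GRH~$\Rightarrow$~\eqref{Re(lambda) under GRH}, I would start from the identity
\[
1-\frac{1}{\rho}\;=\;\frac{\rho-1}{\rho}\;=\;-\frac{\overline{\rho}}{\rho},
\]
valid for any critical-line zero $\rho=\tfrac12+i\gamma$, which shows that $1-1/\rho$ lies on the unit circle and has real part $(4\gamma^2-1)/(4\gamma^2+1)$. Writing $1-1/\rho=e^{i\theta_\rho}$ and using $T_n(\cos\theta_\rho)=\cos(n\theta_\rho)$ then gives $\re\bigl((1-1/\rho)^n\bigr)=T_n((4\gamma^2-1)/(4\gamma^2+1))$ term by term. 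Substituting into \eqref{lambda F def} and taking real parts yields \eqref{Re(lambda) under GRH}; the right-hand sum is absolutely convergent since each summand is $O(n^2/\gamma^2)$ for large $|\gamma|$, so one may freely interchange the principal-value summation in \eqref{lambda F def} with this absolutely convergent sum.

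Finally, for \eqref{Re(lambda) under GRH}~$\Rightarrow$~GRH, I would invoke the generalized Li criterion: since $(4\gamma^2-1)/(4\gamma^2+1)\in[-1,1]$ for every real $\gamma$ and $|T_n(u)|\le 1$ on $[-1,1]$, every summand on the right-hand side of \eqref{Re(lambda) under GRH} is non-negative, so if \eqref{Re(lambda) under GRH} holds for every $n\in\mathbb{N}$ then $\re(\lambda_F(n))\ge 0$ for every $n\in\mathbb{N}$, and \cite[Theorem 4.3]{Sm10} delivers GRH. The main delicacy I anticipate is the careful bookkeeping of the boundary terms in the integration by parts, particularly the correct attribution of the mass of $dN_F$ at $x=0$ to the Siegel zeros; once that is settled, the chain of implications closes via one application of the Chebyshev identity $T_n(\cos\theta)=\cos(n\theta)$ and one application of the Li criterion.
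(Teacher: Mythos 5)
Your proposal is correct and follows essentially the same route as the paper: GRH gives \eqref{Re(lambda) under GRH} via the observation that $1-1/\rho$ lies on the unit circle with real part $(4\gamma^2-1)/(4\gamma^2+1)$ together with $T_n(\cos\theta)=\cos(n\theta)$; the sum and integral forms are identified unconditionally by Stieltjes integration by parts using \eqref{Tn deriv}, $T_n(-1)=(-1)^n$ and $N_F(x)=O(x\log x)$; and the converse follows from the non-negativity of each summand and the generalized Li criterion. The only cosmetic difference is that the paper computes $\lambda_F(n)+\lambda_F(-n)$ rather than taking real parts term by term, which amounts to the same thing.
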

Let us note here that each zero in the sum \eqref{Re(lambda) under GRH} should be taken according to its multiplicity.
\begin{proof}
First, we assume that the GRH holds true, hence each $\rho\in Z(F)$ can be represented as $\rho=1/2+i\gamma$, $\gamma \in\R$.

We start with the definition of the Li coefficient and the formula $\lambda_F(n)= \overline{\lambda_F(-n)}$, hence
\begin{eqnarray}
2\re(\lambda_F(n))&=&\lambda_F(n)+\overline{\lambda_F(n)}=\lambda_F(n)+\lambda_F(-n)\nonumber \\
&=&\lim_{T\to+\infty}\sum_{|\gamma|\leq T} \left(2-\left( 1-\frac{1}{1/2+i\gamma}\right)^n - \left( 1-\frac{1}{1/2+i\gamma}\right)^{-n} \right)\nonumber\\
&=&\sum_{\rho=1/2+i\gamma} \left(2-\left(\frac{2\gamma+i}{2\gamma-i}\right)^n - \left(\frac{2\gamma+i}{2\gamma-i}\right)^{-n} \right).\nonumber
\end{eqnarray}
It is obvious that $$\left|\left(\frac{2\gamma+i}{2\gamma-i}\right)\right|=1$$ for all $\gamma=\im(\rho)$, hence
$$\left(\frac{2\gamma+i}{2\gamma-i}\right)^n + \left(\frac{2\gamma+i}{2\gamma-i}\right)^{-n}= 2 \cos(n\theta(\gamma))=2T_n(\cos(\theta(\gamma))),$$ where $\theta(\gamma)$ is the argument of $\frac{2\gamma+i}{2\gamma-i}$, when $\gamma\geq 0$, or minus the argument of $\frac{2\gamma+i}{2\gamma-i}$, when $\gamma< 0$. (Here, we take the principal value of the argument which lies in the interval $(-\pi,\pi]$.)

In both cases, due to the parity of the cosine function, we have $$\cos(\theta(\gamma))=\re\left(\frac{2\gamma+i}{2\gamma-i}\right) = \frac{4\gamma^2 -1}{4\gamma^2 +1},$$
thus, \eqref{Re(lambda) under GRH} holds true.\\

Now, we prove the equation
\begin{equation}\label{two repr}
\sum_{\rho=\sigma + i\gamma \in Z(F)} \left( 1-T_n \left(\frac{4\gamma^2 -1}{4\gamma^2 +1} \right)\right) =16n\int\limits_{0}^{\infty} \frac{x N_F(x) }{(4x^2+1)^2} U_{n-1}\left(\frac{4x^2 -1}{4x^2 +1} \right)dx - (1-(-1)^n)N_F(0).
\end{equation}
 We start with
\begin{equation}\label{int rep lambda 1}
\sum_{\rho=\sigma + i\gamma \in Z(F)} \left( 1-T_n \left(\frac{4\gamma^2 -1}{4\gamma^2 +1} \right)\right) = \lim_{T\to\infty}\int\limits_{0}^{T} \left( 1-T_n \left(\frac{4x^2 -1}{4x^2 +1} \right)\right)dN_F(x).
\end{equation}
A simple computation shows that
\begin{equation}\label{1-Tn growth}
1-T_n \left(\frac{4x^2 -1}{4x^2 +1} \right) \sim \frac{n^2}{x^2}+O_n(x^{-4}), \text{   as   } x\to\infty,
\end{equation}
hence, integrating by parts in \eqref{int rep lambda 1}, having in mind that $N(x)=O(x\log x )$, as $x\to\infty$  and that $T_n(-1)=(-1)^n$, we deduce that
$$
\sum_{\rho=\sigma + i\gamma \in Z(F)} \left( 1-T_n \left(\frac{4\gamma^2 -1}{4\gamma^2 +1} \right)\right)= \lim_{T\to\infty}\int\limits_{0}^{T} N_F(x)d\left(T_n \left(\frac{4x^2 -1}{4x^2 +1} \right)\right) -(1-(-1)^n)N_F(0).
$$
Employing equation \eqref{Tn deriv}, we immediately conclude that \eqref{two repr} holds true.
\vskip .06in
It is left to prove the converse, i.e. that equation \eqref{Re(lambda) under GRH} for all positive $n$ implies the GRH.\\

This follows trivially from the fact that $T_n(x)= \cos(n\theta)$, for $x=\cos(\theta)$, hence the right-hand side of equation \eqref{Re(lambda) under GRH} is always non-negative. Therefore, equation \eqref{Re(lambda) under GRH} yields that $\re(\lambda_F(n))\geq 0$ for all positive $n$, a condition equivalent to the GRH. This completes the proof.
\end{proof}

In some special cases, we may deduce that even more general statement holds true. Namely, we have the following theorem treating the cases $n=1$ and $n=2$.

\begin{theorem}\label{thm. n=1, n=2}
\begin{itemize}
\item[(i)]
The GRH for $F\in\shf_{1/\sqrt{3}}$ is equivalent to the equation
$$\re(\lambda_{F}(1))=16 \int_{0}^{\infty}\frac{x}{(4x^{2}+1)^{2}} N_{F}(x)dx$$
or to the equation
\begin{equation} \label{lambda of 1}
\re(\lambda_{F}(1))=\sum_{\rho=\sigma + i\gamma \in Z(F)} \left( 1-\frac{4\gamma^2 -1}{4\gamma^2 +1}\right)= \sum_{\rho=\sigma + i\gamma \in Z(F)} \frac{2}{4\gamma^2 +1}.
\end{equation}
\item[(ii)] If, additionally, $F\in\shf_{\sqrt{6}}$, then, the GRH for $F$ is also equivalent to the equation
$$\re(\lambda_{F}(2))=64 \int_{0}^{\infty}\frac{x(4x^{2}-1)}{(4x^{2}+1)^{3}} N_{F}(x)dx$$
or to the equation
\begin{equation} \label{lambda of 2}
\re(\lambda_{F}(2))=\sum_{\rho=\sigma + i\gamma \in Z(F)} \left( 1-T_2\left(\frac{4\gamma^2 -1}{4\gamma^2 +1} \right)\right) = \sum_{\rho=\sigma + i\gamma \in Z(F)} \frac{32\gamma^2 }{(4\gamma^2 +1)^2}.
\end{equation}
\end{itemize}
\end{theorem}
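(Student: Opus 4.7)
The forward direction in both parts is immediate from Theorem~\ref{thm: GRH implies formulas} specialized to $n=1$ or $n=2$, so the content lies in the reverse implication. My plan is to compute $\re(\lambda_F(n))$ directly from its defining series and compare it pair by pair against the alleged right-hand side, using the functional-equation symmetry $\rho\leftrightarrow 1-\bar\rho$. On-critical-line zeros are fixed by this symmetry, and one checks that their single-zero contributions to the two sides coincide; so the entire problem reduces to analysing the discrepancy on genuine off-line pairs $\{\tfrac12+\delta+i\gamma,\,\tfrac12-\delta+i\gamma\}$ with $\delta\ne 0$.

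For part (i), setting $p=\tfrac14+\delta^2+\gamma^2$ a direct calculation gives
$$
\frac{\sigma}{\sigma^2+\gamma^2}+\frac{1-\sigma}{(1-\sigma)^2+\gamma^2}=\frac{p-2\delta^2}{p^2-\delta^2}
$$
for the pair contribution to $\re(\lambda_F(1))=\re\sum_\rho^{\ast}\tfrac{1}{\rho}$, against a pair contribution $\tfrac{4}{1+4\gamma^2}$ for the alleged sum. Combining over a common denominator, the pair-wise discrepancy reduces to
$$
\frac{\delta^2(1-4\delta^2-12\gamma^2)}{(p^2-\delta^2)(1+4\gamma^2)}.
$$
Under the hypothesis $|\gamma|>1/\sqrt{3}$ we have $12\gamma^2>4\ge 1-4\delta^2$, so this expression is strictly negative for every off-line pair. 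The alleged sum converges absolutely (using $N_F(T)=O(T\log T)$ and the fact that each summand is $O(\gamma^{-2})$), the actual $\re(\lambda_F(1))$ exists as the symmetric principal-value limit, and the pair-difference series is itself absolutely convergent (each term $O(\gamma^{-4})$). Equality of the two sides therefore forces a sum of strictly negative terms to vanish, which is possible only in the absence of off-line pairs. That gives the GRH.

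For part (ii), the method is identical but the algebra is heavier. Starting from
$$
\re(\lambda_F(2))=\sum_\rho^{\ast}\left(\frac{2\sigma}{\sigma^2+\gamma^2}-\frac{\sigma^2-\gamma^2}{(\sigma^2+\gamma^2)^2}\right),
$$
the pair contribution simplifies to $4[\gamma^2(p^2+\delta^2)-\delta^2(p^2-\delta^2)]/(p^2-\delta^2)^2$, to be compared against the alleged pair contribution $64\gamma^2/(1+4\gamma^2)^2=4\gamma^2/P^2$ with $P=\tfrac14+\gamma^2$. After clearing denominators, the numerator of the discrepancy is a polynomial in $x=\delta^2$ whose constant term vanishes; writing it as $x(N_1+N_2 x+N_3 x^2+N_4 x^3)$, the four coefficients $N_j$ are explicit polynomials in $\gamma^2$ (proportional respectively to $-P^2(3\gamma^4-2\gamma^2+\tfrac1{16})$, $-7\gamma^6+\gamma^4-\tfrac{3\gamma^2}{16}+\tfrac{1}{32}$, $-5\gamma^4+\tfrac{\gamma^2}{2}-\tfrac1{16}$, and $-\gamma^2$). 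I would then check that each $N_j$ is strictly negative for $\gamma^2>6$: the third has negative discriminant and so is negative throughout, the fourth is manifestly negative, and the first and second are negative beyond thresholds lying well below $\gamma^2=6$ (one by monotonicity of a cubic in $\gamma^2$, the other by an explicit root comparison). The conclusion then follows exactly as in part (i).

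The main obstacle is the explicit polynomial bookkeeping in part (ii): one must compute all four coefficients $N_j$, establish their negativity throughout $\gamma^2>6$, and verify that the threshold $\sqrt{6}$ is safely above every critical value. A secondary, routine point in both parts is to justify that zero pairing may be carried out inside the symmetric partial sums $\sum_{|\gamma|\le T}$; this is immediate because both members of each pair share the same ordinate $|\gamma|$ and the resulting pair-difference series is absolutely convergent.
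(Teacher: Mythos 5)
Your proposal is correct, and it follows the same overall architecture as the paper's proof: the forward direction is delegated to Theorem~\ref{thm: GRH implies formulas}, and the converse is reduced, via the symmetry $\rho\leftrightarrow 1-\bar\rho$ and the observation that on-line zeros contribute identically to both sides, to showing that each genuine off-line pair contributes \emph{strictly less} to $\re(\lambda_F(n))$ than to the right-hand side of \eqref{lambda of 1} or \eqref{lambda of 2}. Where you diverge is in how that pairwise strict inequality is certified. The paper introduces $g_{n,\gamma}(\sigma)=\bigl(\tfrac{(\sigma-1)^2+\gamma^2}{\sigma^2+\gamma^2}\bigr)^{n/2}T_n\bigl(\tfrac{\sigma(\sigma-1)+\gamma^2}{\sqrt{(\sigma^2+\gamma^2)((\sigma-1)^2+\gamma^2)}}\bigr)$, shows that $1-g_{n,\gamma}(\sigma)$ is strictly concave in $\sigma$ on $[0,1]$ once $|\gamma|$ exceeds the stated threshold (the second derivative being a positive factor times $\sigma^2-3\gamma^2$ for $n=1$, and times an explicit negative quartic expression for $n=2$), and then invokes the midpoint inequality $2-(g(\sigma)+g(1-\sigma))\le 2(1-g(1/2))$ with equality only at $\sigma=1/2$. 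You instead expand the pair discrepancy directly as a rational function of $\delta^2$ and $\gamma^2$ and check the sign of its numerator coefficient by coefficient; I verified your formulas (the $n=1$ numerator $\delta^2(1-4\delta^2-12\gamma^2)$ and all four coefficients $N_1,\dots,N_4$ for $n=2$) and they are correct, with each $N_j<0$ well before $\gamma^2=6$. The trade-off: the concavity route is what lets the paper's subsequent Remark extend the converse to general $n$ with thresholds $t(n)$ by a degree count on $(g_{n,\gamma})''$, whereas your expansion is elementary and fully self-verifying but its bookkeeping grows with $n$; your route also exposes that the actual thresholds needed for $n=1,2$ are weaker than $1/\sqrt{3}$ and $\sqrt{6}$. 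Your convergence remarks (absolute convergence of the real-part sums, pairing within symmetric partial sums since both members of a pair share the ordinate $\gamma$) dispose of the rearrangement issue correctly, matching what the paper leaves implicit.
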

\begin{proof}

From the proof of Theorem \ref{thm: GRH implies formulas} and the fact that $T_1(x)=x$ we see that, in order to prove the first statement, it is sufficient to prove that \eqref{lambda of 1} yields the GRH. Analogously, in order to prove the second statement, it is sufficient to prove that \eqref{lambda of 2} yields the GRH.\\

We will first present computations for general $n$, and then insert $n=1$ and $n=2$.

Let us denote by $\rho= \sigma + i\gamma$ the non-trivial zeros of $F$. Since the non-trivial zeros of $F$ come in pairs $\rho$ and $1-\overline{\rho}$, we have
\begin{eqnarray}
2\re(\lambda_F(n))=\lambda_F(n)+\lambda_F(-n)&=& \sum_{\rho\in Z(F)} \left( 1-\left(1-\frac{1}{\rho} \right)^n\right)+\sum_{\rho\in Z(F)} \left( 1-\left(1-\frac{1}{1-\overline{\rho}} \right)^{-n}\right)\nonumber\\
&=& \sum_{\rho\in Z(F)} \left( 2-\left(\frac{\sigma-1 +i\gamma}{\sigma+i\gamma} \right)^{n}-\left(\frac{1-\sigma +i\gamma}{-\sigma+i\gamma} \right)^{n}\right).\nonumber
\end{eqnarray}

The two terms on the right hand side of the above equation are complex conjugates, hence we get
$$
\re(\lambda_F(n))= \sum_{\rho=\sigma + i\gamma \in Z(F)} \left( 1-\left(\frac{(\sigma-1)^2 +\gamma^2}{\sigma^2 +\gamma^2} \right)^{n/2} T_n\left( \frac{\sigma(\sigma-1) +\gamma^2}{\sqrt{(\sigma^2 +\gamma^2)((\sigma-1)^2 +\gamma^2)}}\right)\right).
$$
Let us define
$$
g_{n,\gamma}(\sigma)= \left(\frac{(\sigma-1)^2 +\gamma^2}{\sigma^2 +\gamma^2} \right)^{n/2} T_n\left( \frac{\sigma(\sigma-1) +\gamma^2}{\sqrt{(\sigma^2 +\gamma^2)((\sigma-1)^2 +\gamma^2)}}\right)
$$
and let $\gamma_F>0$ be such that all eventual non-trivial zeros of $F$ that lie off the critical line have the absolute value of the imaginary part bigger than $\gamma_F$.
It is obvious that
$$
g_{n,\gamma}(1/2)= T_n\left(\frac{4\gamma^2 -1}{4\gamma^2 +1} \right),
$$
hence, it remains to prove that the equation
\begin{equation}\label{g equals Ts}
\sum_{\rho=\sigma + i\gamma \in Z(F),\, |\gamma|>\gamma_F} \left( 1-g_{n,\gamma}(\sigma)\right)= \sum_{\rho=\sigma + i\gamma \in Z(F),\, |\gamma|>\gamma_F} \left( 1- T_n\left( \frac{4\gamma^2-1}{4\gamma^2+1}\right)\right),
\end{equation}
yields that all $\sigma$ must be equal to $1/2$.

Employing the fact that $Z(F)=1-\overline{Z(F)}$ and that $g_{n,-\gamma}(\sigma)=g_{n,\gamma}(\sigma)$, we may write the sum on the left hand side of equation \eqref{g equals Ts} as
$$
\frac{1}{2}\sum_{\rho=\sigma + i\gamma\in Z(F),\, |\gamma|>\gamma_F} \left( 2- (g_{n,\gamma}(\sigma) + g_{n,\gamma}(1-\sigma))\right),$$
hence it is sufficient to prove that
$$
\frac{1}{2}\sum_{\rho=\sigma + i\gamma\in Z(F),\, |\gamma|>\gamma_F} \left( 2- (g_{n,\gamma}(\sigma) + g_{n,\gamma}(1-\sigma))\right)= \sum_{\rho=\sigma + i\gamma\in Z(F),\, |\gamma|>\gamma_F} \left( 1- g_{n,\gamma}(1/2) \right)$$

if and only if $\sigma=1/2$, for $n=1,2$.\\

Now we distinguish the two cases.
\begin{itemize}
\item[(i)] When $n=1$, from the assumption of the theorem, we have $\gamma_F=1/ \sqrt{3}$ and the second derivative of $ 1- g_{1,\gamma}(\sigma)$ is equal to a product of a positive factor and $\sigma^2-3\gamma^2$, which is negative for $\sigma\in[0,1]$ and $|\gamma|>1/\sqrt{3}$. Hence, the function $ 1- g_{1,\gamma}(\sigma)$ is strictly concave, meaning that $2- (g_{1,\gamma}(\sigma) + g_{1,\gamma}(1-\sigma)) \leq 2\left( 1- g_{1,\gamma}(1/2) \right)$ for all $\sigma\in[0,1]$, $|\gamma|>1/\sqrt{3}$, and the equality holds true if and only if $\sigma=1/2$. Therefore, equation \eqref{lambda of 1} yields the GRH.
\vskip .06in
\item[(ii)] In the case $ n=2$, the assumption of the theorem yields that $\gamma_F=\sqrt{6}$. The second derivative of $ 1- g_{2,\gamma}(\sigma)$ is equal to a product of a positive factor and the expression $-[2 (3 - 2 \sigma) \sigma^4 + 4 \gamma^2 \sigma^2 (-9 + 2 \sigma) + 6 \gamma^4 (1 + 2 \sigma)]$, which is negative for $|\gamma|>\sqrt{6}$ and $\sigma\in[0,1]$. Hence, the function $ 1- g_{2,\gamma}(\sigma)$ is strictly concave, meaning that $2- (g_{2,\gamma}(\sigma) + g_{2,\gamma}(1-\sigma)) \leq 2\left( 1- g_{2,\gamma}(1/2) \right)$ for all $\sigma\in[0,1]$, $|\gamma|>\sqrt{6}$, and the equality holds true if and only if $\sigma=1/2$. Therefore, equation \eqref{lambda of 2} yields the GRH.
\end{itemize}

The proof is complete.
\end{proof}

\begin{remark}\rm
From the recurrence relations satisfied by the Chebyshev polynomials of the first and the second kind, it is obvious that, for all $n\geq 2$ we can write $T_n(x)=2^{n-1}x + P_{n-2}(x)$ and $U_{n}(x)=2^n x + Q_{n-2}(x)$, for some polynomials $P_{n-2}(x)$ and $Q_{n-2}(x)$ of degree $n-2$. Then, a short computation using the relations \eqref{Tn deriv} and \eqref{recursion T U} yields that $(g_{n,\gamma}(\sigma))''$, the second derivative of the function $g_{n,\gamma}(\sigma)$ defined in the proof of Theorem \ref{thm. n=1, n=2}, is equal to a product of positive factors and a polynomial in $\gamma$ of degree $2n$ with positive leading coefficient. Therefore, for any positive integer $n$, there exists a real number $t(n)$ such that for all $\gamma$ satisfying the inequality $|\gamma|>t(n)$ and all $\sigma\in[0,1]$ we have $(1-g_{n,\gamma}(\sigma))''<0$.\\

This implies that for $|\gamma| >t(n)$ and $\sigma\in[0,1]$ we have $2- (g_{n,\gamma}(\sigma) + g_{n,\gamma}(1-\sigma))\leq 2\left( 1- g_{n,\gamma}(1/2) \right)$ and the equality holds true if and only if $\sigma=1/2$. Hence, proceeding analogously as in the proof of Theorem \ref{thm. n=1, n=2} we may obtain more formulas equivalent to the GRH for functions $F\in\shf_{t(n)}$. In other words, we have proved that for $F\in\shf_{t(n)}$, the equation
$$\re(\lambda_{F}(n))=16n \int_{0}^{\infty}\frac{xN_F(x)}{(4x^{2}+1)^{2}} U_{n-1}\left( \frac{(4x^{2}-1)}{(4x^{2}+1)}\right)dx$$
or, equivalently, the equation
\begin{equation*}
\re(\lambda_{F}(n))=\sum_{\rho=\sigma + i\gamma \in Z(F)} \left( 1-T_n\left(\frac{4\gamma^2 -1}{4\gamma^2 +1} \right)\right)
\end{equation*}
both yield the GRH.\\

The sequence $t(n)$ is increasing, hence the assumption that $F\in\shf_{t(n)}$ becomes more restrictive for large $n$.
\end{remark}

An immediate consequence of the first part of Theorem \ref{thm. n=1, n=2} is the following corollary.

\begin{corollary}
The GRH for $F\in\shf_{1/\sqrt{3}}$ is equivalent to the equation
$$
\re\left(\frac{\xi_F'}{\xi_F}(0) \right)= -2 \sum_{\rho=\sigma + i\gamma\in Z(F)} \frac{1}{4\gamma^2+1}
$$
or to the equation
$$
\re\left(\frac{\xi_F'}{\xi_F}(0) \right)= -16\int\limits_{0}^{\infty}\frac{x}{(4x^2+1)^2} N_F(x)dx
$$
\end{corollary}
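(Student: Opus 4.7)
The plan is to reduce the corollary directly to the $n=1$ case of Theorem \ref{thm. n=1, n=2}(i) by identifying $\lambda_F(1)$ with the value of $\xi_F'/\xi_F$ at the origin, up to sign. First I would unfold the definition \eqref{lambda F def} at $n=1$, which gives
$$\lambda_F(1) = \left.\sum_{\rho \in Z(F)}\right.^{\ast} \left(1 - \left(1 - \frac{1}{\rho}\right)\right) = \left.\sum_{\rho \in Z(F)}\right.^{\ast} \frac{1}{\rho}.$$
On the other hand, specializing the Hadamard-type identity \eqref{log der xi F} at $z = 0$ yields
$$\frac{\xi_F'}{\xi_F}(0) = \left.\sum_{\rho \in Z(F)}\right.^{\ast} \frac{1}{0 - \rho} = -\lambda_F(1),$$
so that taking real parts delivers the key identity $\re(\lambda_F(1)) = -\re\!\left(\frac{\xi_F'}{\xi_F}(0)\right)$.

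Next I would substitute this identity into the two equivalent characterizations of the GRH supplied by Theorem \ref{thm. n=1, n=2}(i), namely $\re(\lambda_F(1)) = 16 \int_0^\infty x N_F(x)(4x^2+1)^{-2}\, dx$ and $\re(\lambda_F(1)) = \sum_{\rho = \sigma + i\gamma \in Z(F)} 2/(4\gamma^2+1)$, and solve for $\re(\xi_F'/\xi_F(0))$. This immediately produces the two displayed equations in the statement of the corollary; the equivalence with the GRH is then inherited verbatim from Theorem \ref{thm. n=1, n=2}(i).

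I expect no genuine obstacle. The only point deserving a brief verification is that \eqref{log der xi F} may indeed be applied at $z = 0$, which requires $0 \notin Z(F)$. This is automatic under the hypothesis $F \in \shf_{1/\sqrt{3}}$: a non-trivial zero at the origin would lie off the critical line with imaginary part of absolute value $0 < 1/\sqrt{3}$, contradicting the very definition of $\shf_{1/\sqrt{3}}$. Once this is noted, the proof collapses to a one-line substitution.
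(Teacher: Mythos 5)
Your proposal is correct and follows exactly the paper's own argument: apply \eqref{log der xi F} at $z=0$ to obtain $\frac{\xi_F'}{\xi_F}(0)=-\lambda_F(1)$ and then invoke part (i) of Theorem \ref{thm. n=1, n=2}. Your added remark that $0\notin Z(F)$ for $F\in\shf_{1/\sqrt{3}}$ is a sensible extra check that the paper leaves implicit.
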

\begin{proof}
The proof follows from equation \eqref{log der xi F} with $z=0$, which implies that
\begin{equation} \label{xiF as lambda}
\frac{\xi_F'}{\xi_F}(0)=-\left.\sum_{\rho\in Z(F)}\right.^{\ast} \frac{1}{\rho}=-\lambda_F(1)
\end{equation}
and the first part of Theorem \ref{thm. n=1, n=2}.
\end{proof}

\begin{remark}\rm
When all coefficients $a_F(n)$ in the Dirichlet series representation \eqref{ds rep} are real, the reflection principle and the functional equation imply that $Z(F)=1-Z(F)=1-\overline{Z(F)}$, hence, the generalized Li coefficients are real.

Moreover, under the GRH, it is possible to apply methods from  \cite{OOM11} (in the case of Dirichlet $L-$functions with real coefficients) and \cite{OOM14} (in the case of Hecke $L$-functions) and obtain that, for any $F\in\shf$ with real coefficients $a_F(n)$ and such that $N_F(0)=0$, we have
$$\lambda_{F}(n)=32n \int_{0}^{\infty}\frac{x}{(4x^{2}+1)^{2}} N_{F}^{+}(x)U_{n-1} \left( \frac{4x^{2}-1}{4x^{2}+1}\right)dx$$
and
$$\lambda_{F}(n)=2\sum_{\rho=\sigma + i\gamma \in Z(F),\, \gamma>0} \left( 1-T_{n}\left(\frac{4\gamma^2 -1}{4\gamma^2 +1} \right)\right),$$
where each zero in the last sum is counted according to its multiplicity.

In this  paper, we prove a more general statement, i.e., we show that the above representations of $\lambda_F(n)$ are equivalent to the GRH. Moreover, the method of proof of our Theorems \ref{thm: GRH implies formulas} and \ref{thm. n=1, n=2} is different from the methods given in \cite{OOM11} and \cite{OOM14} (under the GRH).
\end{remark}

\section{An integral formula for the generalized Li coefficients}

In this section we partially evaluate the integral in \eqref{int rep re lambda} and write equation \eqref{int rep re lambda} as a sum of integral and oscillatory part. Namely, we prove the following theorem.

\begin{theorem}\label{theorem.expression}
The GRH for $F\in\shf_0$ is equivalent to the formula
\begin{equation}\label{int rep Th 7}
\re(\lambda_F(n))= n\log Q_F +(1-(-1)^n)(2m_F-N_F(0))+ I_{2}(n)+16n \int_{0}^{+\infty}\frac{xS_F(x)}{(4x^2+1)^2} U_{n-1}\left(\frac{4x^2 -1}{4x^2 +1} \right) dx,
\end{equation}
for all positive integers $n$, where the function $S_F(x)$ is defined in Section 2.2. and
\begin{equation}\label{I2}
I_2(n)=\re\left(\sum_{j=1}^{r}\left[n\lambda_{j}\psi(\lambda_{j}+\mu_{j})\ + \sum_{k=2}^{n}{n \choose k} \frac{\lambda_{j}^k}{(k-1)!}\psi^{(k-1)}(\lambda_{j}+\mu_{j})\right]\right).
\end{equation}
In the case when $n=1$ the second sum in the above equation is equal to zero.
\end{theorem}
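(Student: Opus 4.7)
The strategy is to start from \eqref{int rep re lambda}, which by Theorem \ref{thm: GRH implies formulas} is equivalent to the GRH for $F\in\shf_0$, and to substitute into it the explicit formula \eqref{N F in terms of argument} for $N_F(x)$. Writing
$$\Sigma(x) := \sum_{j=1}^{r}\Bigl[\log\Gamma\bigl(\lambda_j(\tfrac12+ix)+\mu_j\bigr) + \log\Gamma\bigl(\lambda_j(\tfrac12+ix)+\overline{\mu_j}\bigr)\Bigr],$$
the decomposition of $N_F(x)$ into the four summands $2m_F$, $\tfrac{2x\log Q_F}{\pi}$, $\tfrac{1}{\pi}\im\Sigma(x)$, and $S_F(x)$ turns \eqref{int rep re lambda} into a sum of four integrals. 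The last of these is precisely the oscillatory integral in \eqref{int rep Th 7}; the first two, together with the $-(1-(-1)^n)N_F(0)$ term already present in \eqref{int rep re lambda}, will produce the explicit part $n\log Q_F + (1-(-1)^n)(2m_F - N_F(0))$; and the bulk of the proof will consist in showing that the $\Sigma$-piece equals $I_2(n)$. Since every step below is an unconditional identity, \eqref{int rep Th 7} is equivalent to \eqref{int rep re lambda}, so the equivalence with the GRH is preserved.

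For the elementary pieces, introduce the substitution $\theta = \pi-2\arctan(2x)$, under which $x = \tfrac12\cot(\theta/2)$, $\cos\theta = \tfrac{4x^2-1}{4x^2+1}$, and $\sin\theta = \tfrac{4x}{4x^2+1}$. A short computation yields $\tfrac{16x\,dx}{(4x^2+1)^2} = -\sin\theta\,d\theta$; combined with $U_{n-1}(\cos\theta) = \sin(n\theta)/\sin\theta$, this reduces the kernel to
\begin{equation*}
16n\int_0^{\infty}\frac{x f(x)}{(4x^2+1)^2}\,U_{n-1}\!\left(\frac{4x^2-1}{4x^2+1}\right)dx = n\int_0^{\pi} f\bigl(x(\theta)\bigr)\sin(n\theta)\,d\theta.
\end{equation*}
Taking $f\equiv 2m_F$ yields $2m_F(1-(-1)^n)$. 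Taking $f(x)=\tfrac{2x\log Q_F}{\pi}$ gives $\tfrac{n\log Q_F}{\pi}\int_0^{\pi}\cot(\theta/2)\sin(n\theta)\,d\theta = \tfrac{n\log Q_F}{\pi}\int_0^{\pi}(1+\cos\theta)U_{n-1}(\cos\theta)\,d\theta$, which equals $n\log Q_F$ by the Chebyshev recurrence $2\cos\theta\,U_{n-1}=U_n+U_{n-2}$ together with the classical evaluations $\int_0^{\pi}U_k(\cos\theta)\,d\theta = \pi$ (for $k$ even) or $0$ (for $k$ odd). Taking $f=S_F$ reproduces the $S_F$-integral on the right-hand side of \eqref{int rep Th 7}.

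The remaining and main step is the identity
\begin{equation*}
\frac{n}{\pi}\int_0^{\pi}\im\Sigma\bigl(x(\theta)\bigr)\sin(n\theta)\,d\theta = I_2(n).
\end{equation*}
Writing $y = e^{i\theta}$ and using $\cot(\theta/2) = i\tfrac{y+1}{y-1}$, a direct manipulation gives the key identity $\lambda_j(\tfrac12+ix(\theta))+\mu_j = \lambda_j+\mu_j+\tfrac{\lambda_j y}{1-y}$, so the $j$th summand of $\Sigma$ becomes $g_j(y) := \log\Gamma\bigl(\lambda_j+\mu_j+\tfrac{\lambda_j y}{1-y}\bigr)$. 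The axioms $\lambda_j>0$ and $\re(\lambda_j+2\mu_j)>0$ force the would-be singularities $y_{j,k} = \tfrac{k+\lambda_j+\mu_j}{k+\mu_j}$ ($k\geq 0$) to satisfy $|y_{j,k}|>1$, so $g_j$ is holomorphic on the open unit disk. Expanding
$$g_j(y) = \log\Gamma(\lambda_j+\mu_j) + \sum_{k\geq 1}\frac{\lambda_j^k}{k!}\psi^{(k-1)}(\lambda_j+\mu_j)\!\left(\frac{y}{1-y}\right)^{\!k}$$
and using $\bigl(\tfrac{y}{1-y}\bigr)^{k} = \sum_{m\geq 0}\binom{m+k-1}{k-1}y^{k+m}$ yields the $y^n$-coefficient
$$a_n^{(j)} = \sum_{k=1}^{n}\binom{n-1}{k-1}\frac{\lambda_j^k}{k!}\psi^{(k-1)}(\lambda_j+\mu_j).$$
The companion function $\tilde{g}_j$ obtained by replacing $\mu_j$ with $\overline{\mu_j}$ has Taylor coefficients $\overline{a_n^{(j)}}$, so the cosine Fourier modes of $\im(g_j+\tilde g_j)(e^{i\theta})$ cancel and only $2\sum_{k\geq 1}\re(a_k^{(j)})\sin(k\theta)$ remains. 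The orthogonality relation $\int_0^{\pi}\sin(k\theta)\sin(n\theta)\,d\theta = \tfrac{\pi}{2}\delta_{kn}$ selects $k=n$, and the algebraic identity $n\binom{n-1}{k-1}/k! = \binom{n}{k}/(k-1)!$ transforms $\sum_{j} n\re(a_n^{(j)})$ into exactly $I_2(n)$. The main obstacle is technical: the Taylor expansion of $g_j$ converges only inside the unit disk, and $g_j$ has an essential singularity at $y=1$ (the boundary point corresponding to $x\to\infty$), so the termwise integration on $|y|=1$ must be justified via Abel summability together with a dominated-convergence argument on the radial limit $r\to 1^-$, the vanishing of $\sin(n\theta)$ at the endpoints $\theta=0$ and $\theta=\pi$ controlling the growth of $g_j(re^{i\theta})$ there. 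Once this is in place, the converse implication is automatic because every link in the chain of identities is reversible.
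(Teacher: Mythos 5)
Your proof is correct, and while it shares the paper's skeleton --- substitute the four-term decomposition \eqref{N F in terms of argument} of $N_F$ into \eqref{int rep re lambda} (whose equivalence with the GRH comes from Theorem \ref{thm: GRH implies formulas}), identify the $S_F$-piece with the oscillatory integral, and evaluate the remaining pieces by unconditional identities --- your evaluation of those pieces takes a genuinely different route. The paper works with the generating function $G(x,t)=\sum_n G_n(x)t^n$ for all $n$ at once: the elementary terms are read off from closed forms of $\int_0^\infty G_x'(x,t)\,dx$ and $\int_0^\infty xG_x'(x,t)\,dx$, and the gamma term is reduced by a residue computation of $\int_{-\infty}^{\infty}G(x,t)\psi(\lambda_j(\tfrac12+ix)+\mu_j)\,dx$ to $\frac{\pi t}{(1-t)^2}\psi(\frac{\lambda_j}{1-t}+\mu_j)$, followed by a Taylor expansion in $t$. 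You instead fix $n$, pass to $\theta=\pi-2\arctan(2x)$ (equivalently $y=e^{i\theta}$, with the same M\"obius relation $\tfrac12+ix=\frac{1}{1-y}$ that underlies the paper's pole at $z_t$), and extract the $n$th Fourier/Taylor coefficient of $\log\Gamma(\lambda_j+\mu_j+\frac{\lambda_j y}{1-y})$ directly; your Abel-summation/dominated-convergence step at $y=1$ plays the role of the paper's justification for interchanging $\sum_n$ and $\int$, and your orthogonality relation $\int_0^\pi\sin(k\theta)\sin(n\theta)\,d\theta=\tfrac\pi2\delta_{kn}$ is the circle-integral avatar of the paper's Cauchy residue. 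One small strengthening you may wish to make: rather than only checking that the points $y_{j,k}$ lie outside the unit disk, note that $y\mapsto\frac{\lambda_j}{1-y}+\mu_j$ maps the open disk onto the half-plane $\re(w)>\tfrac{\lambda_j}{2}+\re\mu_j>0$ (by axiom (iii')), which at once gives holomorphy of $\log\Gamma$ there with no branch-cut issues and the uniform lower bound on $\re(w)$ needed for your dominating function as $r\to1^-$. Both approaches buy the same theorem; yours avoids contour integration and the termwise-in-$n$ interchange, at the cost of the boundary-convergence argument on $|y|=1$.
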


\begin{proof}
Let us put
$$
G_{n}(x):= 1-T_n \left(\frac{4x^2 -1}{4x^2 +1}\right).
$$
Then, using equation \eqref{N F in terms of argument} we get
$$
\re(\lambda_F(n))= -\int_{0}^{+\infty} N_F(x)G'_{n}(x)dx  -(1-(-1)^n)N_F(0)=I_{1}(n)+I_{2}(n)+I_{3}(n) -(1-(-1)^n)N_F(0),
$$
where
$$ I_{1}(n):=-\int_{0}^{+\infty} G'_{n}(x)\left(2m_{F}+\frac{2\log Q_{F}}{\pi}x\right)dx,$$
$$ I_{2}(n):=- \frac{1}{\pi} \sum_{j=1}^{r} \im\left(  \int_{0}^{+\infty} G'_{n}(x)\left(\log\Gamma\left(\lambda_{j}\left(\frac{1}{2}+ix\right)+\mu_{j}\right)+ \log\Gamma\left(\lambda_{j}\left(\frac{1}{2}+ix\right)+\overline{\mu_{j}}\right)\right)dx\right) $$
and
$$ I_{3}(n):=- \int_{0}^{+\infty} G'_{n}(x)S_F(x)dx= 16n \int_{0}^{+\infty}\frac{xS_F(x)}{(4x^2+1)^2} U_{n-1}\left(\frac{4x^2 -1}{4x^2 +1} \right) dx.$$
Therefore, in order to prove the statement, it is left to evaluate integrals $I_1(n)$ and $I_2(n)$.
\vskip .06in
We start with the generating function \eqref{gen fon for Tn} for the sequence of Chebyshev polynomials of the first kind and define
$$G(x,t)=\sum_{n=0}^{\infty}G_{n}(x)t^{n}=\frac{2t(1+t)}{(1-t)}\cdot\frac{1}{(2(1-t)x)^{2}+(1+t)^{2}}.$$
Then, $G(x,t)$ is differentiable with respect to $x\in\R$ for all $t\in(0,1)$ and $G'_x(x,t)=\sum_{n=0}^{\infty}G_{n}'(x)t^{n}$.\\

Moreover, equation \eqref{1-Tn growth} together with the recurrence relation \eqref{recursion T U} yield that
$$G_{n}'(x) = -\frac{16nx}{(4x^2+1)^2} U_{n-1}\left(\frac{4x^2 -1}{4x^2 +1} \right)=O_n\left(\frac{x}{(4x^2 +1)^2} \right),$$
as $x\to \pm\infty$, where the implied constant depends polynomially upon $n$, hence we may interchange the sum and the integral in order to deduce that
\begin{equation}\label{sum I1}
\sum_{n=0}^{\infty}I_{1}(n)t^{n}=-\int_{0}^{+\infty}G_x'(x,t)\left(2m_{F}+\frac{2\log Q_{F}}{\pi}x\right)dx
\end{equation}
for all $t\in(0,1)$.

Analogously, since $|\log \Gamma(z)| \sim |z|\log|z|,$ as $|z|\to\infty$, interchanging the sum and the integral, we get
\begin{multline}\label{sum I2}
\sum_{n=0}^{\infty}I_{2}(n)t^{n}=\\=- \frac{1}{\pi} \sum_{j=1}^{r} \im\left(  \int_{0}^{+\infty} G'_{x}(x,t)\left(\log\Gamma\left(\lambda_{j}\left(\frac{1}{2}+ix\right)+\mu_{j}\right)+ \log\Gamma\left(\lambda_{j}\left(\frac{1}{2}+ix\right)+\overline{\mu_{j}}\right)\right)dx\right),
\end{multline}
for all $t\in(0,1)$.
\vskip .06in
Employing equation \eqref{sum I1} we get
\begin{eqnarray}
\sum_{n=0}^{\infty}I_{1}(n)t^{n}&=&-2m_{F}\int_{0}^{+\infty}G_x'(x,t)dx-\frac{2\log Q_{F}}{\pi}\int_{0}^{+\infty}G_x'(x,t)xdx\nonumber\\
&=&4m_{F}\frac{t}{(1+t)(1-t)}+\log Q_{F}\frac{t}{(1-t)^{2}}\nonumber\\
&=&\sum_{n=1}^{\infty}\left(n\log Q_{F}+2m_{F}(1-(-1)^{n})\right)t^{n},\nonumber
\end{eqnarray}
hence
\begin{equation}
I_{1}(n)=n\log Q_{F}+2m_{F}(1-(-1)^{n}).
\end{equation}
Now, we derive an expression for $I_{2}(n)$. First, we note that
$$
\lim_{x\to\infty}\left(G(x,t)\log\Gamma\left(\lambda_{j}\left(\frac{1}{2}+ix\right)+\mu_{j}\right) \right)=0,
$$
for all $j=1,...,r$. Moreover, we have
\begin{equation*}
\im\left(G(0,t)\sum_{j=1}^{r}\left(\log\Gamma\left(\lambda_{j}/2+\mu_{j}\right)+
\log\Gamma\left(\lambda_{j}/2+\overline{\mu_{j}}\right)\right)\right)=0,
\end{equation*}
hence, integration by parts in \eqref{sum I2} yields to
\begin{equation*}
\sum_{n=0}^{\infty}I_{2}(n)t^{n}=\frac{1}{\pi}\sum_{j=1}^{r}\lambda_{j}\re\left(\int_{0}^{+\infty} G(x,t)\left(\psi\left(\lambda_{j}\left(\frac{1}{2}+ix\right)+\mu_{j}\right) + \psi\left(\lambda_{j}\left(\frac{1}{2}+ix\right)+\overline{\mu_{j}}\right) \right)dx\right).
\end{equation*}

Next, we use the fact that the function $G(x,t)$ is even function of $x$ to deduce that
\begin{eqnarray*}
\re\left(\int_{0}^{+\infty} G(x,t)\psi\left(\lambda_{j}\left(\frac{1}{2}+ix\right)+\overline{\mu_{j}}\right)dx\right)&=& \re\left(\int_{0}^{+\infty} G(x,t)\psi\left(\lambda_{j}\left(\frac{1}{2}-ix\right)+\mu_{j}\right)dx\right)\\
&=&\re\left(\int_{-\infty}^{0} G(x,t)\psi\left(\lambda_{j}\left(\frac{1}{2}+ix\right)+\mu_{j}\right)dx\right).
\end{eqnarray*}
Therefore,
\begin{equation}\label{sum I2 real}
\sum_{n=0}^{\infty}I_{2}(n)t^{n}=\frac{1}{\pi}\sum_{j=1}^r \lambda_j \re\left(\int_{-\infty}^{+\infty} G(x,t)\psi\left(\lambda_{j}\left(\frac{1}{2}+ix\right)+\mu_{j}\right)dx\right),
\end{equation}
for all $t\in(0,1)$.

Since the function $\psi(z)$ grows as $\log|z|$, as $|z|\to\infty$, in order to compute the integral on the right hand side of \eqref{sum I2 real}, we integrate the function $$g_j(z,t):=G(z,t)\psi\left(\lambda_{j}\left(\frac{1}{2}+iz\right)+\mu_{j}\right)$$ along the negatively oriented contour $C_R$ consisting of the segment of the real line from $-R$ to $R$ and the half circle $\{z\in\C:|z|=R \text{ and  } \arg{z} \in[-\pi,0]\}$, and then let $R\to+\infty$. Inside the contour $C_R$ (and along $C_R$), the imaginary part of the argument $z$ is non-positive, hence $$\re\left(\lambda_{j}\left(\frac{1}{2}+iz\right)+\mu_{j}\right) \geq \re(\lambda_j/2 + \mu_j)>0$$
(by the axiom (iii')). Therefore, the function $\psi\left(\lambda_{j}\left(\frac{1}{2}+iz\right)+\mu_{j}\right)$ is holomorphic inside $C_R$ and along $C_R$. The function $G(z,t)$ is holomorphic along $C_R$ and possesses a simple pole at $z_t=-\frac{1+t}{2(1-t)} i$ inside $C_R$, hence the function $g_{j}(z,t)$ is holomorphic on $C_R$ and inside $C_R$ except for a simple pole at $z_t$ with the residue
$$
\mathrm{Res}_{z=z_t}g_{j}(z,t)= \frac{it}{2(1-t)^2}\psi\left(\frac{\lambda_j}{(1-t)} + \mu_j\right).
$$
The contour $C_R$ is negatively oriented, hence, applying the calculus of residues we get
$$
\int_{C_R}G(z,t)\psi\left(\lambda_{j}\left(\frac{1}{2}+iz\right)+\mu_{j}\right)dz = -2\pi i \mathrm{Res}_{z=z_t}g_{j}(z,t) = \frac{\pi t}{(1-t)^2}\psi\left( \frac{\lambda_j}{(1-t)} + \mu_j\right).
$$

The integral over $C_R$ is equal to the sum of the integral over the segment of the real line from $-R$ to $R$ and the integral along the half-circle $|z|=R$, where $\arg(z)$ takes values from $0$ to $-\pi$. From the growth of the digamma function $\psi(z)$, as $|z|\to\infty$, it is obvious that the integral along the half-circle tends to zero as $R\to+\infty$, hence, letting $R\to+\infty$ we deduce that
$$
\int_{-\infty}^{+\infty} G(x,t)\psi\left(\lambda_{j}\left(\frac{1}{2}+ix\right)+\mu_{j}\right)dx = \frac{\pi t}{(1-t)^2} \psi\left(\frac{\lambda_j}{(1-t)}+\mu_j \right).
$$
This together with \eqref{sum I2 real} yields

\begin{equation} \label{I2 as sum of digammas}
\sum_{n=0}^{\infty}I_{2}(n)t^{n}= \frac{t}{(1-t)^2} \re\left( \sum_{j=1}^r \lambda_j  \psi\left(\frac{\lambda_j}{(1-t)}+\mu_j \right)\right),
\end{equation}
for all $t\in(0,1)$.

Now, we compute the Taylor series expansion of the right hand side of \eqref{I2 as sum of digammas} at $t=0$. We put $y_j=\lambda_j(1-t)^{-1}+\mu_j$, hence
$$
\frac{\lambda_j}{(1-t)^2}\frac{d}{dt}\log\Gamma\left(\frac{\lambda_j}{(1-t)}+\mu_j \right)= \frac{dy_j}{dt}\frac{d}{dy_j}\log\Gamma(y_j).
$$
The Taylor series expansion at $t=0$ corresponds to the Taylor series expansion at $y_j=\lambda_j+\mu_j$, hence we get
$$
\frac{d}{dy_j}\log\Gamma(y_j)= \psi(\lambda_j+\mu_j)+\sum_{k=1}^{\infty}\frac{\psi^{(k)}(\lambda_j+\mu_j)}{k!}(y_j-\lambda_j-\mu_j)= \psi(\lambda_j+\mu_j)+\sum_{k=1}^{\infty}\frac{\psi^{(k)}(\lambda_j+\mu_j)}{k!}\frac{\lambda_j^k t^k}{(1-t)^{k}}.
$$
Therefore
$$
\frac{\lambda_jt}{(1-t)^2}\frac{d}{dt}\log\Gamma\left(\frac{\lambda_j}{(1-t)}+\mu_j \right)=\frac{\lambda_jt}{(1-t)^2} \psi(\lambda_j+\mu_j) + \sum_{k=1}^{\infty}\frac{\psi^{(k)}(\lambda_j+\mu_j)}{k!}\frac{\lambda_j^{k+1} t^{k+1}}{(1-t)^{k+2}}.
$$
Using the identity
$$\frac{1}{(1-t)^{k+2}}=\sum_{i=0}^{\infty}\frac{(i+k+1)!}{i!(k+1)!}t^{i},$$
for $k\geq 0$ we obtain
\begin{multline}\label{Taylor comp}
\frac{\lambda_j t}{(1-t)^2}\frac{d}{dt}\log\Gamma\left(\frac{\lambda_j}{(1-t)}+\mu_j \right)=\\= \lambda_{j}\sum_{n=1}^{\infty}n \psi(\lambda_j+\mu_j)t^{n} + \sum_{i=0}^{\infty}\sum_{k=1}^{\infty}\frac{\psi^{(k)}(\lambda_j+\mu_j)}{k!}\lambda_j^{k+1} \frac{(i+k+1)!}{i!(k+1)!}t^{i+k+1}.
\end{multline}
Putting $k+1=l$ and taking $n=i+l$, we may write the second sum in equation \eqref{Taylor comp} as
$$
\sum_{n=2}^{\infty}\left(\sum_{l=2}^{n}\frac{\psi^{(l-1)}(\lambda_j+\mu_j)}{(l-1)!}\lambda_j^{l}{n \choose l}\right) t^{n}.
$$
Inserting the above equation in \eqref{Taylor comp}, together with \eqref{I2 as sum of digammas} yields \eqref{I2}. The proof is complete.
\end{proof}

In the case when $n=1$, from the above theorem and the first part of Theorem \ref{thm. n=1, n=2} we deduce the following generalization of the Volchkov criterion.
\begin{corollary}\label{cor.4}
The GRH for $F\in\shf_{1/\sqrt{3}}$ is equivalent to the formula
\begin{equation}\label{fla 1 corr 8}
\re(\lambda_F(1))= \log Q_F +4m_F+\re\left(\sum_{j=1}^{r}\lambda_{j}\psi(\lambda_{j}+\mu_{j}) \right)+ \int_{0}^{+\infty}\frac{xS_F(x)}{(x^2+1/4)^2} dx,
\end{equation}
or to the formula
\begin{equation}\label{fla 2 corr 8}
\int_{0}^{+\infty}\frac{xS_F(x)}{(x^2+1/4)^2} dx=-\re\left(\frac{\xi_F'}{\xi_F}(0)\right)-\log Q_F -4m_F-\re\left(\sum_{j=1}^{r}\lambda_{j}\psi(\lambda_{j}+\mu_{j}) \right).
\end{equation}
\end{corollary}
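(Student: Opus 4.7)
The plan is to specialize Theorem~\ref{theorem.expression} to $n=1$ and then invoke Theorem~\ref{thm. n=1, n=2}(i) to upgrade the resulting identity from a consequence of GRH to an equivalent of GRH.

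First I would unpack \eqref{int rep Th 7} at $n=1$. Since $U_0\equiv 1$, the oscillatory integral collapses to $16\int_0^{\infty}xS_F(x)(4x^2+1)^{-2}\,dx$, and using $(4x^2+1)^2=16(x^2+1/4)^2$ this reduces to the integral appearing in \eqref{fla 1 corr 8}. The prefactor $(1-(-1)^1)(2m_F-N_F(0))$ equals $4m_F-2N_F(0)$; however, the hypothesis $F\in\shf_{1/\sqrt{3}}$ excludes real non-trivial zeros (a Siegel zero would satisfy $|\im\rho|=0<1/\sqrt{3}$), so $N_F(0)=0$ and the prefactor becomes $4m_F$. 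The quantity $I_2(1)$ defined in \eqref{I2} has an empty $k\ge 2$ sum, leaving only $\re\bigl(\sum_{j=1}^{r}\lambda_j\psi(\lambda_j+\mu_j)\bigr)$. Assembling the pieces produces precisely \eqref{fla 1 corr 8}.

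For the equivalence with GRH, I would observe that the passage from \eqref{int rep re lambda} to \eqref{int rep Th 7} carried out in the proof of Theorem~\ref{theorem.expression} is an \emph{unconditional} rewriting, obtained by substituting the decomposition of $N_F$ in \eqref{N F in terms of argument} and evaluating the archimedean integrals in closed form. Hence, for the single index $n=1$, formula \eqref{int rep Th 7} is equivalent, as an identity, to the $n=1$ case of \eqref{int rep re lambda}. Combining this with Theorem~\ref{thm. n=1, n=2}(i), which states that the $n=1$ version of \eqref{int rep re lambda} is equivalent to the GRH for $F\in\shf_{1/\sqrt{3}}$, gives the desired equivalence between \eqref{fla 1 corr 8} and the GRH.

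Finally, \eqref{fla 2 corr 8} is obtained from \eqref{fla 1 corr 8} by taking real parts in \eqref{xiF as lambda}, which yields $\re(\lambda_F(1))=-\re(\xi_F'/\xi_F(0))$, and solving for the integral term. I do not expect any serious obstacle: once Theorems~\ref{theorem.expression} and \ref{thm. n=1, n=2}(i) are in hand the corollary is essentially bookkeeping, the one substantive step being the observation that the strict lower bound $|\im\rho|>1/\sqrt{3}$ built into $\shf_{1/\sqrt{3}}$ automatically eliminates the Siegel-zero contribution $N_F(0)$ from the final formula.
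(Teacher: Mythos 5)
Your proposal is correct and follows the paper's own route exactly: specialize Theorem~\ref{theorem.expression} to $n=1$ (where $U_0\equiv 1$, the $k\ge 2$ sum in $I_2(1)$ is empty, and $(4x^2+1)^2=16(x^2+1/4)^2$ rescales the integral), invoke Theorem~\ref{thm. n=1, n=2}(i) together with the observation that the rewriting of the right-hand side is unconditional, and obtain \eqref{fla 2 corr 8} from \eqref{xiF as lambda}. The one imprecision is your justification that $N_F(0)=0$: the definition of $\shf_{1/\sqrt{3}}$ constrains only zeros \emph{off} the critical line, so a real zero at $s=1/2$ exactly is not excluded by that hypothesis; the paper drops the $N_F(0)$ term silently (and checks $N_\zeta(0)=0$ separately in its example), so you are being more explicit than the source, but the reason you give is not quite what the definition delivers.
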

\begin{proof}
Formula \eqref{fla 1 corr 8} is an immediate consequence of \eqref{int rep Th 7} with $n=1$ and Theorem \ref{thm. n=1, n=2}. Equation \eqref{fla 2 corr 8} follows from \eqref{fla 1 corr 8} and \eqref{xiF as lambda}.
\end{proof}

\vskip .06in
\begin{example}\rm
In this example we show how to derive the  Volchkov criterion \eqref{sbm crit} for the Riemann zeta function as a special case of Corollary \ref{cor.4}.
\vskip .06in
For the Riemann zeta function, we have $r=1,\ \lambda_{1}=1/2,\ \mu_1=0,\ Q_{\zeta}=\pi^{-1/2},\ m_{\zeta}=1$, $N_{\zeta}(0)=0$ and the first non-trivial zero (which is on the critical line) has imaginary part bigger than $14$, hence assumptions of the corollary \ref{cor.4} are satisfied. Furthermore, we have $I_{2}(1)=\frac{1}{2}\psi(\frac{1}{2})=-\frac{\gamma}{2}-\log2$.
\vskip .06in
The coefficients in the Dirichlet series representation of $\zeta(s)$ are real, hence $\lambda_\zeta(1)$ is real and, according to the last paragraph in section 2.2., $S_\zeta(x)$ is equal to $\frac{2}{\pi}\arg \zeta(1/2 +ix)$. Therefore, Corollary \ref{cor.4} yields that the Riemann Hypothesis is equivalent to the statement
$$\frac{1}{\pi}\int_{0}^{+\infty}\frac{2x \arg \zeta(\frac{1}{2}+ix)}{(x^2+1/4)^2} dx=-\frac{\xi_{\zeta}'}{\xi_{\zeta}}(0)-4+\frac{\gamma}{2}+\log(2\sqrt{\pi}).$$
Now, using that $$-\frac{\xi_{\zeta}'}{\xi_{\zeta}}(0)=\frac{\gamma}{2}+1-\frac{1}{2}\log(4\pi),$$ we deduce that the Riemann Hypothesis is equivalent to \eqref{sbm crit}.
\end{example}

\vskip .10in

\section{The GRH and the generalized Euler-Stieltjes constants of the second kind}

\vskip .10in
The sum of generalized Euler--Stieltjes constants of the second kind is closely related to the Li criterion for the GRH, as it appears in the non-archimedean part \eqref{S NA} of the Li coefficient $\lambda_F(-n)$, $n\geq 1$.\\

As an immediate consequence of Theorems \ref{thm. n=1, n=2} and \ref{theorem.expression}, employing the relation $\re(\lambda_F(n))=\re(\lambda_F(-n))$, we obtain an integral formula for the sum of $\re(\gamma_F(l-1))$  appearing in \eqref{lambda of -n} and equivalent to the GHR, which we state as the following corollary.

\begin{corollary} \label{cor.5} The GRH for $F\in\shf_0$ is equivalent to the formula
\begin{eqnarray} \label{sum of gammaF}
\sum_{l=1}^n {n \choose l}\re\left(\gamma_F(l-1)\right)&=&(1-(-1)^n)(2m_F-N_F(0)) -m_F\nonumber\\
&&\ \ +\  16n \int_{0}^{+\infty}\frac{xS_F(x)}{(4x^2+1)^2} U_{n-1}\left(\frac{4x^2 -1}{4x^2 +1} \right) dx,
\end{eqnarray}
for all positive integers $n$, where $S_F(x)$ is defined in Section 2.2.

Specially, for functions $F\in\shf_{1/\sqrt{3}}$ the GRH is equivalent to the formula
$$
\re\left(\gamma_F(0)\right)= 3m_F+ \int_{0}^{+\infty}\frac{xS_F(x)}{(x^2+1/4)^2}dx.
$$
\end{corollary}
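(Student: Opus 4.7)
The plan is to simply match two independent expressions for $\re(\lambda_F(n))$. Formula \eqref{lambda of -n true} expresses $\lambda_F(-n)$ as the sum of a non-archimedean block $m_F + \sum_{l=1}^n \binom{n}{l}\gamma_F(l-1)$, the term $n\log Q_F$, and the archimedean block
$$
n\sum_{j=1}^r \lambda_j \psi(\lambda_j+\mu_j) + \sum_{j=1}^r\sum_{l=2}^n \binom{n}{l}\frac{\lambda_j^l}{(l-1)!}\psi^{(l-1)}(\lambda_j+\mu_j),
$$
whose real part is precisely the quantity $I_2(n)$ defined in Theorem \ref{theorem.expression}. Since $\re(\lambda_F(n))=\re(\lambda_F(-n))$ by the symmetry $\rho\leftrightarrow 1-\overline{\rho}$ noted in Section 2.3, taking real parts of \eqref{lambda of -n true} gives the identity
$$
\re(\lambda_F(n)) = m_F + \sum_{l=1}^n \binom{n}{l}\re(\gamma_F(l-1)) + n\log Q_F + I_2(n),
$$
valid unconditionally for every $F\in\shf_0$ and every $n\in\N$.

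Next I would invoke Theorem \ref{theorem.expression}: the GRH for $F\in\shf_0$ is equivalent to
$$
\re(\lambda_F(n))= n\log Q_F +(1-(-1)^n)(2m_F-N_F(0))+ I_{2}(n)+16n \int_{0}^{+\infty}\frac{xS_F(x)}{(4x^2+1)^2} U_{n-1}\!\left(\frac{4x^2 -1}{4x^2 +1} \right) dx
$$
for all positive integers $n$. Substituting the unconditional identity above for $\re(\lambda_F(n))$, the $n\log Q_F$ and $I_2(n)$ contributions cancel on both sides, and after moving $m_F$ across one obtains exactly \eqref{sum of gammaF}. Since the substitution is reversible, the equivalence with the GRH is preserved in both directions.

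For the special case $F\in\shf_{1/\sqrt{3}}$ with $n=1$, I would simply instantiate \eqref{sum of gammaF}: $U_0\equiv 1$, $1-(-1)^1=2$, and the assumption $F\in\shf_{1/\sqrt{3}}$ rules out real non-trivial zeros (which would be off the critical line with $|\im\rho|=0<1/\sqrt{3}$), so $N_F(0)=0$. Together with the algebraic simplification $16/(4x^2+1)^2 = 1/(x^2+1/4)^2$, the general formula collapses to
$$
\re(\gamma_F(0)) = 4m_F - m_F + \int_0^{\infty}\frac{xS_F(x)}{(x^2+1/4)^2}\,dx = 3m_F + \int_0^{\infty}\frac{xS_F(x)}{(x^2+1/4)^2}\,dx.
$$

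There is essentially no obstacle: the corollary is a bookkeeping consequence of Theorem \ref{theorem.expression} together with the arithmetic formula \eqref{lambda of -n true} for $\lambda_F(-n)$. The only point requiring a moment's care is the identification of the archimedean part of \eqref{lambda of -n true} with $I_2(n)$ as defined in \eqref{I2}, which is immediate on comparing the two expressions term by term and noting that the summands $\lambda_j\psi(\lambda_j+\mu_j)$ and $\frac{\lambda_j^l}{(l-1)!}\psi^{(l-1)}(\lambda_j+\mu_j)$ enter with the same binomial coefficients in both formulae, so that taking the real part yields exactly $I_2(n)$.
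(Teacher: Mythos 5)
Your derivation of the general formula \eqref{sum of gammaF} is correct and is exactly the paper's route: take real parts of the arithmetic identity \eqref{lambda of -n true}, use $\re(\lambda_F(n))=\re(\lambda_F(-n))$ to get the unconditional identity $\re(\lambda_F(n))=m_F+\sum_{l=1}^n\binom{n}{l}\re(\gamma_F(l-1))+n\log Q_F+I_2(n)$, and cancel against the formula of Theorem \ref{theorem.expression}, which is an equivalence valid for all $n$ simultaneously.

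There is, however, a gap in your treatment of the special case. The statement for $F\in\shf_{1/\sqrt{3}}$ asserts that the GRH is equivalent to the \emph{single} identity at $n=1$, whereas \eqref{sum of gammaF} is an equivalence between the GRH and the family of identities for \emph{all} positive integers $n$. Instantiating that equivalence at $n=1$ only yields the forward implication (GRH implies the $n=1$ formula); it does not show that the $n=1$ formula alone forces the GRH. For the converse you must invoke Theorem \ref{thm. n=1, n=2}(i), which is precisely where the hypothesis $F\in\shf_{1/\sqrt{3}}$ is used: it guarantees, via the concavity argument for $g_{1,\gamma}$, that the single equation \eqref{lambda of 1} (equivalently, \eqref{int rep re lambda} with $n=1$) already implies the GRH; the paper's one-line proof cites both Theorems \ref{thm. n=1, n=2} and \ref{theorem.expression} for exactly this reason, and your proposal never uses Theorem \ref{thm. n=1, n=2}. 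A further minor point: your justification that $N_F(0)=0$ is not literally correct, since a real non-trivial zero at $s=1/2$ lies \emph{on} the critical line and is therefore not excluded by membership in $\shf_{1/\sqrt{3}}$; the paper makes the same tacit assumption, but the argument as you state it (``rules out real non-trivial zeros'') overclaims.
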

\vskip .06in

Moreover, since asymptotic relation \eqref{bound S NA} is equivalent to the GRH, from Corollary \ref{cor.5} we immediately deduce the following asymptotic integral criteria for the GRH.

\begin{corollary} \label{cor:asymptotic eq} The GRH for $F\in\shf_0$ is equivalent to the formula
\begin{equation} \label{asymptotic eq to GRH}
\int_{0}^{+\infty}\frac{xS_F(x)}{(x^2+1/4)^2} U_{n-1}\left(\frac{4x^2 -1}{4x^2 +1} \right) dx =O\left(\frac{\log n}{\sqrt{n}} \right), \text{   as   } n\to\infty.
\end{equation}
\end{corollary}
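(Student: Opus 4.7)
The plan is to derive this asymptotic criterion as a direct algebraic consequence of Corollary \ref{cor.5} combined with the asymptotic bound \eqref{bound S NA}, both of which are equivalent to the GRH. First I would rewrite the integral in \eqref{asymptotic eq to GRH} using the identity $(4x^{2}+1)^{2}=16(x^{2}+1/4)^{2}$, so that equation \eqref{sum of gammaF} takes the cleaner form
\begin{equation*}
\sum_{l=1}^{n}\binom{n}{l}\re\bigl(\gamma_F(l-1)\bigr)=(1-(-1)^n)\bigl(2m_F-N_F(0)\bigr)-m_F+ n\int_{0}^{+\infty}\frac{xS_F(x)}{(x^{2}+1/4)^{2}}\,U_{n-1}\!\left(\frac{4x^{2}-1}{4x^{2}+1}\right)dx.
\end{equation*}

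For the forward direction, assume the GRH. Corollary \ref{cor.5} then validates the displayed identity for every $n\in\N$, while the bound \eqref{bound S NA} guarantees that its left-hand side is $O_F(\sqrt{n}\log n)$ as $n\to\infty$. The remaining terms $(1-(-1)^n)(2m_F-N_F(0))-m_F$ are bounded in $n$, so the product $n\cdot(\text{integral})$ is $O_F(\sqrt{n}\log n)$; dividing through by $n$ yields \eqref{asymptotic eq to GRH}.

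For the converse direction, assume \eqref{asymptotic eq to GRH}, which states that $n$ times the integral is $O(\sqrt{n}\log n)$. Invoking the biconditional of Corollary \ref{cor.5}, which expresses the sum $\sum_{l=1}^{n}\binom{n}{l}\re(\gamma_F(l-1))$ linearly in terms of the integral up to a bounded correction, the asymptotic on the integral transfers to $\sum_{l=1}^{n}\binom{n}{l}\re(\gamma_F(l-1))=O(\sqrt{n}\log n)$, and by the second equivalence \eqref{bound S NA} this bound forces the GRH.

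I do not anticipate any technical obstacle, since the heavy lifting has been carried out in Corollary \ref{cor.5} and in the cited proof that \eqref{bound S NA} is equivalent to the GRH. The only delicate point is conceptual, in the reverse implication: Corollary \ref{cor.5} provides the identity relating the integral and the sum of generalized Euler--Stieltjes constants only as an equivalence with the GRH, so one must combine it with \eqref{bound S NA} to close the loop. The fact that the correction terms $(1-(-1)^n)(2m_F-N_F(0))-m_F$ are uniformly bounded in $n$ makes the transfer of asymptotics between the two sides immediate.
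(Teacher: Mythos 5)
Your argument is, almost verbatim, the paper's own justification: the authors derive Corollary \ref{cor:asymptotic eq} in one sentence by combining Corollary \ref{cor.5} with the equivalence of \eqref{bound S NA} to the GRH, and your normalization $(4x^2+1)^2=16(x^2+1/4)^2$ correctly turns the $16n$ prefactor of \eqref{sum of gammaF} into the $n$ implicit in \eqref{asymptotic eq to GRH}. The forward implication is sound: under the GRH both the identity \eqref{sum of gammaF} and the bound \eqref{bound S NA} are available, the correction terms are bounded in $n$, and dividing by $n$ gives \eqref{asymptotic eq to GRH}.

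The converse direction, however, contains a circularity that you flag as ``delicate'' and then wave away. Corollary \ref{cor.5} asserts that the identity \eqref{sum of gammaF} holds for all $n$ \emph{if and only if} the GRH holds; it does not assert the identity unconditionally. (It cannot: by the chain of unconditional reductions in the proofs of Theorems \ref{thm: GRH implies formulas} and \ref{theorem.expression}, the identity for a given $n$ is equivalent to $\re(\lambda_F(n))=\sum_{\rho}\bigl(1-T_n\bigl(\tfrac{4\gamma^2-1}{4\gamma^2+1}\bigr)\bigr)$, which is precisely the statement Theorem \ref{thm: GRH implies formulas} shows to be equivalent to the GRH.) So when you ``invoke the biconditional of Corollary \ref{cor.5}'' to transfer the $O(\sqrt{n}\log n)$ bound from the integral to $\sum_{l}\binom{n}{l}\re(\gamma_F(l-1))$, you are using the identity in a situation where it has not been established — you would need the GRH to license it, which is what you are trying to prove. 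The gap is not cosmetic: the quantity in \eqref{asymptotic eq to GRH} is, via \eqref{N F in terms of argument}, determined entirely by the functional-equation data and the multiset of \emph{imaginary parts} of the non-trivial zeros, so an asymptotic statement about it cannot, by formal manipulation alone, detect whether zeros sit on the line $\re(s)=1/2$. To close the converse you would need an additional argument (for instance, that a violation of the GRH forces $\re(\lambda_F(n))$ to oscillate with exponentially growing amplitude while the zero-counting side stays $O(n\log n)$, and then to relate that back to the integral). Since the paper's own deduction is the same one-liner, you have not missed a better argument hidden in the text; but as written your reverse implication does not go through.
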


In the special case, when coefficients in the Dirichlet series expansion \eqref{ds rep} of $F\in\shf$ are real and the function $F$ possesses no non-trivial zeros on the real line, the coefficients $\gamma_F(l)$ are real and $S_F(x)=\frac{2}{\pi}\arg F(1/2+ix)$. In this case we have the following corollary.

\begin{corollary} Let $F\in\shf$ be a function with real coefficients in the Dirichlet series representation \eqref{ds rep} that has no non-trivial zeros on the real line. Then, the GRH is equivalent to the formula
\begin{equation} \label{gamma sum repr}
\sum_{l=1}^n {n \choose l}\gamma_F(l-1)=m_F(1-2(-1)^n)+\frac{32n}{\pi}\int_{0}^{+\infty}\frac{x \arg F(1/2+ix)}{(4x^2+1)^2} U_{n-1}\left(\frac{4x^2 -1}{4x^2 +1} \right) dx,
\end{equation}
for all positive integers $n$.
Under additional assumption that $F\in\shf_{1/\sqrt{3}}$, the  GRH is equivalent to the relation
\begin{equation} \label{gamma0 repr}
\gamma_F(0)=3m_F+ \frac{2}{\pi}\int_{0}^{+\infty}\frac{x \arg F(1/2+ix)}{(x^2+1/4)^2}dx.
\end{equation}
\end{corollary}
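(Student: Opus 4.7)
The plan is to deduce this corollary directly from Corollary \ref{cor.5} by implementing the three simplifications that the hypotheses (real Dirichlet coefficients and absence of non-trivial real zeros) impose on the general formula \eqref{sum of gammaF}.

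First I would observe that since $a_F(n)\in\R$ for every $n$, the reflection principle gives $\overline{F(s)}=F(\bar s)$. Applied to the Laurent (Taylor) expansion of $F'/F$ around $s=1$, this forces every coefficient $\gamma_F(k)$ to be real, so $\re(\gamma_F(l-1))=\gamma_F(l-1)$. The hypothesis that $F$ has no non-trivial real zeros is exactly $N_F(0)=0$. Finally, as recorded in the last paragraph of Section \ref{sec:preliminaries}.2, when the Dirichlet coefficients are real and $F$ has no Siegel zeros, the non-trivial zeros come in conjugate pairs and $S_F(x)$ coincides with $\tfrac{2}{\pi}\arg F(1/2+ix)$.

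Substituting these three facts into \eqref{sum of gammaF}, the left-hand side drops the $\re$, the constant term simplifies as
\[
(1-(-1)^n)\bigl(2m_F-N_F(0)\bigr)-m_F=(1-(-1)^n)\cdot 2m_F-m_F=m_F\bigl(1-2(-1)^n\bigr),
\]
and pulling the scalar $2/\pi$ out of $S_F(x)$ transforms the prefactor $16n$ into $32n/\pi$. This yields \eqref{gamma sum repr}. For the second statement, I specialize to $n=1$: the binomial sum collapses to $\gamma_F(0)$, the constant term becomes $m_F(1-2(-1))=3m_F$, the polynomial $U_0$ equals $1$, and the elementary identity $(4x^2+1)^2=16(x^2+1/4)^2$ reduces the constant $32/\pi$ to $2/\pi$ with denominator $(x^2+1/4)^2$. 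This produces \eqref{gamma0 repr}; equivalently one can read it off from the specialised form of Corollary \ref{cor.5} for $F\in\shf_{1/\sqrt{3}}$.

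No genuine obstacle arises; the result is a pure specialisation of Corollary \ref{cor.5}. The only care required is bookkeeping of the constants and verifying that the identification $S_F(x)=\tfrac{2}{\pi}\arg F(1/2+ix)$ is indeed available, which uses both hypotheses simultaneously: reality of the $a_F(n)$ to obtain the conjugate symmetry of zeros, and absence of real zeros to ensure that the contour argument giving $S_F$ does not cross a zero on $[1/2,\infty)\cap\R$.
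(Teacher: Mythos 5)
Your proposal is correct and follows essentially the same route as the paper, which obtains this corollary as a direct specialisation of Corollary \ref{cor.5} using exactly the three observations you list (reality of the $\gamma_F(l)$, $N_F(0)=0$, and $S_F(x)=\tfrac{2}{\pi}\arg F(1/2+ix)$), with the $n=1$ case resting on the $\shf_{1/\sqrt{3}}$ version of Corollary \ref{cor.5}. The constant bookkeeping you perform matches the paper's formulas.
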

\vskip .06in
\begin{remark}\rm
Assuming that the coefficients in the Dirichlet series representation \eqref{ds rep} of the function $F$ are real, under the GRH, the above formula provides an integral representation of the coefficients $\gamma_F(l)$ appearing in the Laurent (Taylor) series expansion of the function $F'(s)/F(s)$ at $s=1$.
\vskip .06in
For example, when $n=1$, under the GRH, the constant term $\gamma_F(0)$ in the Laurent (Taylor) series expansion of the function $F'(s)/F(s)$ at $s=1$ possesses the integral representation \eqref{gamma0 repr}. Plugging in $n=2$ into the formula \eqref{gamma sum repr} we get, under GRH that
$$
\gamma_F(1)=-7m_F+\frac{64}{\pi}\int_{0}^{+\infty}\frac{x (4x^2 -3) \arg F(1/2+ix)}{(4x^2+1)^3}dx.
$$
Proceeding inductively in $n$, using formula \eqref{gamma sum repr} it is possible to obtain an integral representation of all generalized Euler-Stieltjes constants of the second kind, under the GRH.
\end{remark}
\vskip .06in
\begin{remark} \rm
Equation \eqref{gamma0 repr} is equivalent to the Volchkov criteria \eqref{sbm crit} for the Riemann Hypothesis, since $\gamma_{\zeta}(0)=\gamma$, $m_{\zeta}=1$ and $S_{\zeta}(x)=\frac{2}{\pi}\arg \zeta(1/2 +ix)$, hence Corollary \ref{cor.5} can also be viewed as a generalization of the Volchkov criteria to a larger class of functions.
\end{remark}

\section{An application to automorphic $L$-functions}
\vskip .10in
In this section, we derive relations equivalent to the GRH for
automorphic $L$-functions attached to irreducible, cuspidal unitary representations $GL_N(\Q)$.\\

In \cite{OdSm11} it was shown that the (finite) automorphic $L$-function $L(s,\pi)$, defined by the product of its local factors \eqref{autL funct} belongs to the class $\shf$. Moreover, for $F(s)=L(s,\pi)$ (which is a function in $\shf$) we have $r=N,\ Q_{F}=Q(\pi)^{1/2}\pi^{-N/2},\ \lambda_{j}=1/2,\ \mu_{j}=\frac{1}{2}k_{j}(\pi),\ j=1,...,N$ and $d_{F}=N$. Furthermore, when $N = 1$ and $\pi$ is trivial, $F(s) = L(s,\pi)$ reduces to the Riemann zeta function, hence, in this case $m_F = 1$ and when $N\neq1$ or $\pi$ is not trivial, the function $F(s)=L(s,\pi)$ is holomorphic at $s=1$, hence $m_{F} = 0$.
\vskip .06in

We put $\delta_\pi=1$ if $N = 1$ and $\pi$ is trivial and $\delta_\pi=0$, otherwise. We denote by $S_\pi(T)$ the value of the function $\frac{1}{\pi}\arg L(s,\pi)$ obtained by continuous variation along the straight lines joining points $1/2 -iT$, $2-iT$, $2+iT$ and $1/2 + iT$.

\vskip .06in
The completed $L-$function $\Lambda(s,\pi)$ is non-vanishing on the line $\re(s)=1$ (see \cite{Gelbart-Shahidi}), hence, by the functional equation \eqref{funct eq Lambda}, $0\notin Z(L(s,\pi))$, meaning that $F(s) = L(s,\pi)\in\shf_0$.
\vskip .06in
The application of Theorem \ref{theorem.expression}, Corollary \ref{cor.4}, Corollary \ref{cor.5} and Corollary \ref{cor:asymptotic eq} to $F(s)=L(s,\pi)$ yields the following corollary.

\begin{corollary}
We have\\
\begin{itemize}
\item[(i)] The GRH for the function $L(s,\pi)$ is equivalent to the formula
\begin{eqnarray}
\re(\lambda_\pi(n))&=& \frac{n}{2}\log\left(\frac{ Q(\pi)}{\pi^{N}}\right)+(1-(-1)^n)(2\delta_\pi-N_\pi(0))+ I_{2}(n)\nonumber \\
&& \ +\ 16n \int_{0}^{+\infty}\frac{xS_\pi(x)}{(4x^2+1)^2} U_{n-1}\left(\frac{4x^2 -1}{4x^2 +1} \right) dx,\nonumber
\end{eqnarray}
for all positive integers $n$, where
$$
I_2(n)=\re\left(\sum_{j=1}^{N}\left[\frac{n}{2}\psi\left(\frac{1}{2}(1+k_{j})\right)\ + \sum_{k=2}^{n}{n \choose k} \frac{1}{2^{k}(k-1)!}\psi^{(k-1)}\left(\frac{1}{2}(1+k_{j})\right)\right] \right).
$$
\item[(ii)] The GRH for $L(s,\pi)$ is equivalent to the formula
\begin{eqnarray}
\sum_{l=1}^n {n \choose l}\re\left(\gamma_\pi(l-1)\right)&=&(1-(-1)^n)(2\delta_\pi-N_\pi(0)) -\delta_\pi\nonumber\\
&&\ +\ 16n \int_{0}^{+\infty}\frac{x\ S_\pi(x)}{(4x^2+1)^2} U_{n-1}\left(\frac{4x^2 -1}{4x^2 +1} \right) dx,\nonumber
\end{eqnarray}
for all positive integers $n$, where the coefficients $\gamma_\pi(l)$ denote the Euler-Stieltjes constants of the second kind associated to $L(s,\pi)$.
\item[(iii)] The GRH for $L(s,\pi)$ is equivalent to the asymptotic formula
$$
\int_{0}^{+\infty}\frac{x\ S_\pi(x)}{(x^2+1/4)^2} U_{n-1}\left(\frac{4x^2 -1}{4x^2 +1} \right) dx = O\left( \frac{\log n}{\sqrt{n}}\right), \text{   as   } n\to\infty.
$$
\item[(iv)]
Under additional assumption that $L(s,\pi)$ possesses no zeros off the critical line with the absolute value of the imaginary part less than or equal to $1/\sqrt{3}$,  the GRH for $L(s,\pi)$ is equivalent to the equation
$$\int_{0}^{+\infty}\frac{xS_\pi(x)}{(x^2+1/4)^2} dx=\re(\lambda_\pi(1))-\frac{1}{2}\log\left(\frac{ Q(\pi)}{\pi^{N}}\right) -4\delta_\pi-\re\left(\sum_{j=1}^{N}\frac{1}{2}\psi\left(\frac{1}{2}(1+k_{j})\right)\right),$$
or, equivalently, to the equation
$$
\re\left(\gamma_{\pi}(0)\right)= 3\delta_{\pi}+16 \int_{0}^{+\infty}\frac{xS_{\pi}(x)}{(4x^2+1)^2}dx.
$$
\end{itemize}
\end{corollary}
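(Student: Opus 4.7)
The plan is to view the corollary as a direct specialization of Theorem \ref{theorem.expression}, Corollary \ref{cor.4}, Corollary \ref{cor.5}, and Corollary \ref{cor:asymptotic eq} to the case $F(s)=L(s,\pi)$, once the membership $L(s,\pi)\in\shf_0$ and the explicit identification of the data $(r,Q_F,\lambda_j,\mu_j,m_F)$ are in place. So before anything else I would record, as preliminaries, the translation table $r=N$, $\lambda_j=1/2$, $\mu_j=k_j(\pi)/2$, $Q_F=Q(\pi)^{1/2}\pi^{-N/2}$, together with $m_F=\delta_\pi$ (invoking the known holomorphy at $s=1$ for non-trivial $\pi$), and $d_F=N$; these are read off from the archimedean factor in the completed $L$-function $\Lambda(s,\pi)$ and the functional equation \eqref{funct eq Lambda}. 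That $L(s,\pi)\in\shf$ is cited from \cite{OdSm11}, and the upgrade to $\shf_0$ follows from the non-vanishing of $\Lambda(s,\pi)$ on $\re(s)=1$ (by \cite{Gelbart-Shahidi}) together with the functional equation \eqref{funct eq Lambda}, which transplants the non-vanishing to $s=0$.

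Next I would handle (i) by inserting the data above into \eqref{int rep Th 7}. The term $n\log Q_F$ becomes $\frac{n}{2}\log\!\left(Q(\pi)/\pi^{N}\right)$, the prefactor $(1-(-1)^n)(2m_F-N_F(0))$ becomes $(1-(-1)^n)(2\delta_\pi-N_\pi(0))$ with $S_F=S_\pi$, and the archimedean quantity \eqref{I2} specializes, using $\lambda_j=1/2$ and $\lambda_j+\mu_j=(1+k_j(\pi))/2$, to
\[
I_2(n)=\re\!\left(\sum_{j=1}^{N}\left[\frac{n}{2}\psi\!\left(\tfrac{1}{2}(1+k_j)\right)+\sum_{k=2}^{n}\binom{n}{k}\frac{1}{2^{k}(k-1)!}\,\psi^{(k-1)}\!\left(\tfrac{1}{2}(1+k_j)\right)\right]\right),
\]
since $\lambda_j^k/(k-1)!=1/(2^k(k-1)!)$. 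Part (ii) is the analogous substitution into \eqref{sum of gammaF}: the only adjustments are $m_F\mapsto\delta_\pi$ and $S_F\mapsto S_\pi$, producing the stated identity for $\sum_{l=1}^n\binom{n}{l}\re(\gamma_\pi(l-1))$. Part (iii) is immediate from Corollary \ref{cor:asymptotic eq} applied to $F=L(\,\cdot\,,\pi)$, with $S_F$ replaced by $S_\pi$ and no further changes.

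For (iv) I would invoke Corollary \ref{cor.4}, whose hypothesis $F\in\shf_{1/\sqrt 3}$ is precisely the additional assumption on $L(s,\pi)$ in (iv). Plugging the translation table into \eqref{fla 1 corr 8} gives the first equation of (iv), after rearranging so that the integral is on the left; and applying Corollary \ref{cor.5} in the case $n=1$ (noting that $1-(-1)^1=2$ and that the two instances of $(2\delta_\pi-N_\pi(0))$ cancel to $2\delta_\pi$ only via the identity $N_\pi(0)=0$ already provided by $L(s,\pi)\in\shf_0$, combined with the $\shf_{1/\sqrt 3}$ hypothesis ensuring no zeros at small heights off the critical line) yields the second equation, $\re(\gamma_\pi(0))=3\delta_\pi+16\int_0^{+\infty}\frac{xS_\pi(x)}{(4x^2+1)^2}dx$.

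There is no genuine obstacle here: the proof is a bookkeeping exercise in substitution. The one point that deserves care is the computation of $I_2(n)$ in (i), where the factor $\lambda_j^k$ with $\lambda_j=1/2$ must be correctly combined with the $1/(k-1)!$ in \eqref{I2}; and the verification, for (iv), that $L(s,\pi)$ genuinely lies in $\shf_0\cap\shf_{1/\sqrt 3}$ so that both Corollary \ref{cor.4} and the relevant parts of Corollary \ref{cor.5} apply. Once these are checked, the four statements of the corollary follow line-by-line from the cited results.
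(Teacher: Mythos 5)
Your proposal matches the paper's own (essentially one-line) proof: the authors likewise record the translation table $r=N$, $\lambda_j=1/2$, $\mu_j=k_j(\pi)/2$, $Q_F=Q(\pi)^{1/2}\pi^{-N/2}$, $m_F=\delta_\pi$, verify $L(s,\pi)\in\shf_0$ via the non-vanishing of $\Lambda(s,\pi)$ on $\re(s)=1$ and the functional equation, and then read the corollary off from Theorem \ref{theorem.expression} and Corollaries \ref{cor.4}, \ref{cor.5}, \ref{cor:asymptotic eq} by direct substitution. Your bookkeeping (in particular $\lambda_j^k/(k-1)!=1/(2^k(k-1)!)$ and the factor $16$ relating $(x^2+1/4)^{-2}$ to $(4x^2+1)^{-2}$) is correct, so the argument is the same as the paper's.
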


\section{Concluding remarks}

In this section we present some problems that will be considered in a sequel to this article.

\subsection{Numerical computations}

The asymptotic relation \eqref{asymptotic eq to GRH} which is equivalent to the GRH seems to be very useful for the numerical investigations, due to oscillatory nature of the Chebyshev polynomials of the second kind. It would be interesting to see what would be results of such numerical computations in the case of Dirichlet $L-$functions associated to a certain character, or in the case of Hecke $L-$functions.

\subsection{Generalization of results to the class $\shfs$}

The class $\shf$ which is the main class of functions considered in this paper does not unconditionally contain all types of $L-$functions for which the GRH is assumed to hold true. In \cite{EOSS15}, the authors introduce the class $\shfs \supseteqq \shf$, as the set of all Dirichlet series $F(s)$ converging in some half-plane $\re (s)>\sigma_0\geq 1$, such that the meromorphic continuation of $F(s)$ to $\C$ is a meromorphic function of a finite order with at most finitely many poles, satisfying a functional equation relating values $F(s)$ with $\overline{F(\sigma_1-\overline{s})}$ up to multiplicative gamma factors and such that the logarithmic derivative $F'(s)/F(s)$ has a Dirichlet series representation converging in the half plane $\re (s)>\sigma_0\geq1$.

The assumptions posed on $F\in \s^{\sharp \flat}(\sigma_0, \sigma_1)$ imply that all its non-trivial zeros lie in the strip $\sigma_1-\sigma_0 \leq \re (s) \leq \sigma_0$. The class $\shfs$ contains products of suitable shifts of $L-$functions from $\shf$, as well as products of shifts of certain $L-$functions possessing an Euler product representation that are not in $\shf$ (such as the Rankin-Selberg $L-$functions).

We believe that results of this paper may be derived for functions in the class $\shfs$, under some additional assumptions on the gamma factors appearing in the functional equation axiom.

\subsection{The generalized $\tau-$Li coefficients}

The generalized $\tau$-Li coefficients, for $\tau\in[1,2]$ were introduced by A. Droll in \cite{Dr12} as the $\ast-$convergent series
$$\lambda_{F}(n,\tau)={\sum_{\rho\in Z(F)}}^{*}\left(1-\left(\frac{\rho}{\rho-\tau}\right)^{n}\right)$$
with the property that the inequality $\re(\lambda_F(n,\tau))\geq 0$ for all $n\in\N$ is equivalent to non-vanishing of $F\in\shf$ in the half-plane $\re(s)\geq \tau/2$.\\

We believe that it is possible to establish results similar to Theorem \ref{thm: GRH implies formulas} for the generalized $\tau-$Li coefficients in the sense that non-vanishing of $F\in\shf$ in the half-plane $\re(s)\geq \tau/2$ is equivalent to a formula similar to \eqref{Re(lambda) under GRH} or, equivalently, \eqref{int rep re lambda}. Furthermore, we expect that $\lambda_{F}(n,\tau)$ can be written as a sum of integral and oscillatory part which, in the case when $\tau=1$ coincides with the expression \eqref{int rep Th 7} obtained in Theorem \ref{theorem.expression}.

\subsection{Deriving further relations equivalent to the GRH}

Using Littlewood's theorem (see, e.g. \cite[pages. 132-133]{titchmarsh}) and starting with with relations given in Theorem \ref{thm. n=1, n=2} or with equation \eqref{fla 2 corr 8}, it is possible to obtain more relations equivalent to the GRH.
\vskip .06in
For example, integration by parts in \eqref{fla 2 corr 8} and evaluation of the function $$I_F(x)=\int_{0}^{x} \arg F(1/2+it)dt$$ using Littlewood's theorem along the rectangle with vertices $1/2,$ $1/2+T$, $1/2+T+iT$ and $1/2+iT$, after letting $T\to+\infty$ immediately yields a generalization of equation \eqref{Volch crit} to a more general class of functions.

\vskip .06in
Moreover, we think that there is a connection between equation \eqref{fla 2 corr 8} and the integral
\begin{equation}\label{BSY eq integral}
\frac{1}{2\pi}\int_{\Re(s)=1/2}\frac{\log|F(s)|}{|s|^{2}}|ds|
\end{equation}
which would yield a generalization of the integral criteria of Balazard, Saias and Yor from \cite{BSY99} to the class $\shf_0$.
\vskip .06in
Finally, we believe that for $F\in\shf$ with real Dirichlet series coefficients $a_F(n)$ and no Siegel zeros, the integral \eqref{BSY eq integral} taken along the line $\re(s)=a$, for $a\in(0,1]$,  can be represented as a mathematical expectation of a random variable $\log|F(X_a)|$, where $X_a$ is a complex-valued random variable whose imaginary part has the symmetric Cauchy distribution with scale $a$. In this way it is possible to obtain a relation equivalent to the GRH for $F$ in terms of Cesàro means of a certain ergodic transform, see \cite{EG15} for a similar result in the case when $F(s)$ is the Riemann zeta function.

\end{document}